\newtheorem{theorem}{Theorem}[section]
\newtheorem{lemma}[theorem]{Lemma}
\newtheorem{proposition}[theorem]{Proposition}
\newtheorem{corollary}[theorem]{Corollary}
\newtheorem{definition}{Definition}
\newtheorem{question}{Question}
\numberwithin{equation}{section}
\title{Mass Partitions Via Equivariant Sections of Stiefel Bundles}
\author{\Large Steven Simon}%
\affil{\small \it Bard College\\ \it ssimon@bard.edu}
\date{}
\begin{document}

\maketitle

\begin{abstract} We consider a geometric combinatorial problem naturally associated to the geometric topology of certain spherical space forms. Given a collection of $m$ mass distributions on $\mathbb{R}^n$, the existence of $k$  affinely independent regular $q$-fans, each of which equipartitions each of the measures, can in many cases be deduced from the existence of a $\mathbb{Z}_q$-equivariant section of the Stiefel bundle $V_k(\mathbb{F}^n)$ over $S(\mathbb{F}^n)$, where $V_k(\mathbb{F}^n)$ is the Stiefel manifold of all orthonormal $k$-frames in $\mathbb{F}^n,\, \mathbb{F} = \mathbb{R}$ or $\mathbb{C}$, and $S(\mathbb{F}^n)$ is the corresponding unit sphere. For example, the parallelizability of $\mathbb{R}P^n$ when $n = 2,4$, or $8$ implies that any two masses on $\mathbb{R}^n$ can be simultaneously bisected by each of $(n-1)$ pairwise-orthogonal hyperplanes, while when $q=3$ or 4, the triviality of the circle bundle $V_2(\mathbb{C}^2)/\mathbb{Z}_q$ over the standard Lens Spaces $L^3(q)$ yields that for any mass on $\mathbb{R}^4$, there exist a pair of complex orthogonal regular $q$-fans, each of which  equipartitions the mass. \end{abstract}

\section{Introduction}

\subsection{Measure Equipartitions and Topological Combinatorics}

Topological methods are a well-established tool in combinatorial and discrete geometry, and have been proven especially powerful in the context of \textit{measure equipartition problems}. Given any collection $\mu_1,\ldots, \mu_m$ of absolutely continuous measures on $\mathbb{R}^n$ (called \textit{mass distributions}) one seeks a partition $\mathcal{P}= \{\mathcal{R}_1,\ldots, \mathcal{R}_k\}$ of $\mathbb{R}^n$ by a fixed class of interior disjoint regions so that that each region contains an equal fraction of each total measure:
					\begin{equation*} \mu_i(\mathcal{R}_j)=\frac{1}{k} \mu_i(\mathbb{R}^n) \end{equation*}  for each $1\leq j\leq k$ and each $1\leq i \leq m$. 
					
 Many different partition-types have been considered, e.g., those determined by arrangements of one or more hyperplanes [9-11, 16, 17, 22], various types of fans [5-6, 14] , polyhedral wedges and cones [25-26],  and so on. While each collection of regions posses  underlying geometric and/or permutative symmetries, the mass distributions themselves can be quite asymmetrical, so that direct solutions to such such problems have proven elusive. Instead, these symmetries can be exploited via a natural reduction of  the problem to an equivariant topological framework, the Configuration-Space/Test-Map (CS/TM)  scheme pioneered by \v Zivaljevi\'c (see, e.g., [18, 27-28]),  so that the desired equipartition is identified with a zero of an equivariant map $f: X\stackrel{G}{\rightarrow} W$ from a $G$-space $X$ with a $G$-representation $W$. The existence of the equipartition then follows by demonstrating the non-existence of any equivariant map from $X$ to the representation sphere $S(W)$, which in favorable circumstances can be precluded by vast and powerful algebraic invariants -- equivariant cohomology and ideal-valued index theory, fiber bundles and equivariant obstruction theory, spectral sequences, and so on.
	
\subsection{Our Mass Partition Problem}

We shall consider a mass partition problem where the desired equipartitions are by regular $q$-fans.

\begin{definition} A \textit{$q$-fan $F_q$} in $\mathbb{R}^n$ is the union of $q$ half-hyperplanes centered about a $(2n-2)$-dimensional affine space. A $q$-fan is called \textit{regular} if the angle between successive half-hyperplanes is always $2\pi/q$, in which case the corresponding regions $S_0,\ldots, S_{q-1}$ between the half-hyperplanes are called \textit{regular $q$-sectors}.\end{definition}  

	For example, a $q$-fan in the plane is a collection of $q$ rays emanating from a common point, and a regular $2$-fan in any dimension is a hyperplane. Given a mass distribution $\mu$ on $\mathbb{R}^n$, one says that a regular $q$-fan \textit{equipartitions} $\mu$ if $\mu(S_i)=\frac{1}{q}\mu(\mathbb{R}^n)$ for each $0\leq i<q$. We shall be concerned with finding a family of linearly independent or orthogonal $q$-fans, each of which equipartitions a given collection of mass distributions.
	
\begin{definition} Regular $q$-fans $F_q^1,\ldots, F_q^r$  in $\mathbb{R}^n$ are \textit{linearly independent} (\textit{pairwise orthogonal}) if $F_q^{1\perp}, \ldots, F_q^{r\perp}$ are linearly independent (pairwise orthogonal), where $F_q^{i\perp}$ is the linear span of the normal vectors of the half-hyperplanes of $F_q^i$.\end{definition}
		
\noindent We may now state the main concern of this paper:
		
\begin{question}  What is the maximum number $k=\Omega(q;m,n)$ (or $k=\Omega^\perp(q;m,n)$) such that for any $m$ mass distributions $\mu_1,\ldots,\mu_m$ on $\mathbb{R}^n$, there exist $k$ linearly independent (respectively, orthogonal) regular $q$-fans $F_q^1,\ldots, F_q^k$, each of which equipartitions each $\mu_j$?\end{question}%, i.e., $\mu_j(S_i ^\ell)=\frac{1}{q}\mu_j(\mathbb{R}^n)$ for each $0\leq i <q$, each $1\leq \ell \leq k$, and each $1\leq j \leq m?$ \end{question} 

\subsection{Preliminary Observations}

	Clearly, $\Omega^\perp(q;m,n)\leq \Omega(q;m,n)$. Note that there is a natural distinction between the cases $q=2$ and $q\geq 3$. For instance, $\Omega^\perp(2;1,n)=n$ for all $n$, as one sees by considering the translated coordinate hyperplanes $H_i(t)=\{x \in\mathbb{R}^n\mid x_i=t\}$ for each $t\in\mathbb{R}$ and applying the intermediate value theorem. On the other hand, the rigidity of the angles of a regular $q$-fan implies that $\Omega(q;1,2)=0$ when $q\geq 5$, as one sees by considering two unit disks in the plane at a sufficient distance (we shall show that $\Omega(3;1,2)=\Omega(4;1,2)=1$). Secondly, the condition on linear independence of fans is different in the two cases, as $F_q^\perp$ is one-dimensional when $q=2$ and is two-dimensional otherwise. This distinction manifests itself in the upper bounds  
	
	\begin{equation} \Omega(2;m,n)\leq n-m+1 \hspace{.1in} \textit{and} \hspace{.1in} \Omega(q;m,n)\leq \lfloor\frac{n-m+1}{2}\rfloor , \hspace{.05in} q\geq 3, \end{equation} 

\noindent which can be derived by examining $m$ disjoint balls in $\mathbb{R}^n$ with affinely independent centers: If $F_q^1,\ldots, F_q^k$ are linearly independent regular $q$-fans, $q\geq 3$, each of which equipartitions each ball, then each centering codimension two affine subspace $A_i$ of $F_q^i$ must pass through the center of each of the $m$ balls, so that dim($\bigcap_{i=1}^k A_i)\geq m-1$. On the other hand, dim($\bigcap_{i=1}^k A_i)\leq n -2k$ by linear independence of these $q$-fans, yielding the second upper bound. A similar argument involving the intersection of linearly independent hyperplanes which bisect each ball establishes the first inequality.
		
\section{Main Results and Computations}
		
	We shall establish  lower bounds for $\Omega(q;m,n)$ and $\Omega^\perp(q;m,n)$ which arise from the existence of $\mathbb{Z}_q$-equivariant sections of Stiefel Bundles over spheres, or equivalently from sections of the corresponding quotient bundles over Real Projective Space when $q=2$ and over the standard Lens Spaces when $q\geq3$. Let $\rho(2; \mathbb{R}, n)$, $\rho(q; \mathbb{R}; 2n)$, and $\rho(q; \mathbb{C}, n)$ denote the maximum number $k$ for which the bundles (3.2), (3.3), and (3.4) admit sections, respectively. Then
	
\begin{theorem} $\Omega^\perp(2;m,n)\geq \min\{\rho(2;\mathbb{R},n), \frac{n-1}{m-1}\}$ \end{theorem}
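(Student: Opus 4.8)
The plan is to run a configuration space/test-map argument in which the "test space" of candidate equipartitions is organized so that the relevant topological obstruction is exactly the (non-)existence of a $\mathbb{Z}_2$-equivariant section of the Stiefel bundle $V_k(\mathbb{R}^n)\to S(\mathbb{R}^n)$ (bundle (3.2)), whose sectional number is $\rho(2;\mathbb{R},n)$. First I would recall the classical fact (the Hobby--Rice / ham-sandwich style argument already invoked in the preliminary observations for $\Omega^\perp(2;1,n)=n$) that for a single measure $\mu$ on $\mathbb{R}^n$ and a fixed unit normal direction $v\in S(\mathbb{R}^n)$, there is a canonical choice of bisecting hyperplane orthogonal to $v$: one uses the intermediate value theorem on the one-parameter family $H_v(t)$ of hyperplanes with normal $v$, and if one normalizes so that the positive half-space gets mass $\tfrac12$, the bisecting level $t(v)$ is unique up to the degenerate set where $\mu$ vanishes on a slab. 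This produces, away from degeneracies, a $\mathbb{Z}_2$-equivariant map $S(\mathbb{R}^n)\to \mathbb{R}$ (sending $v\mapsto -v$ to $t\mapsto -t$ suitably), and hence a single bisecting hyperplane depending continuously and antipodally on $v$. The content for $m\geq 2$ measures is that one cannot in general bisect all $m$ with a hyperplane orthogonal to a prescribed $v$; instead one must give up $m-1$ degrees of freedom, which is the source of the $\tfrac{n-1}{m-1}$ term.

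Next I would set up the iteration/obstruction scheme producing $k$ pairwise orthogonal bisecting hyperplanes. Suppose inductively that we have chosen orthonormal normals $v_1,\ldots,v_{i-1}$ together with hyperplanes $H_1,\ldots,H_{i-1}$ orthogonal to them, each simultaneously bisecting all $m$ measures; we then want to choose $v_i\in S(\mathbb{R}^n)\cap\langle v_1,\ldots,v_{i-1}\rangle^\perp\cong S^{n-i}$ and a hyperplane orthogonal to $v_i$ bisecting all $\mu_j$. For a single measure this is always possible and canonical (as above); for $m$ measures, imposing the bisection of $\mu_2,\ldots,\mu_m$ relative to the reference measure $\mu_1$ cuts out, for each $v_i$, an affine condition, and one checks — via the intermediate value theorem applied coordinate by coordinate, using that we still have $S^{n-i}$'s worth of directions minus the $m-1$ linear constraints — that a common bisector exists provided $n-i\geq m-1$, i.e. $i\leq n-m+1$. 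Simultaneously, the requirement that the chosen normals form an orthonormal frame is exactly a section of $V_k(\mathbb{R}^n)\to S(\mathbb{R}^n)$: a section assigns to the first normal $v_1\in S(\mathbb{R}^n)$ a full orthonormal $k$-frame, and $\mathbb{Z}_2$-equivariance handles the antipodal ambiguity $v\mapsto -v$ inherent in "unoriented hyperplane." So the number of pairwise orthogonal fans we can guarantee is at least the minimum of the frame-theoretic bound $\rho(2;\mathbb{R},n)$ and the measure-theoretic bound coming from $i\leq n-m+1$, which after accounting for the fact that the first direction is free rearranges to $\tfrac{n-1}{m-1}$.

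More precisely, for the counting I would phrase it as follows: if $k\leq\rho(2;\mathbb{R},n)$ we get an equivariant frame field, and then the condition that each successive hyperplane can be made to bisect all $m$ measures requires $n-1\geq (k-1)(m-1)$, equivalently $k\leq \tfrac{n-1}{m-1}+1$ — but one must be careful that the combinatorial bound as stated is $\min\{\rho(2;\mathbb{R},n),\tfrac{n-1}{m-1}\}$, so I would reconcile the off-by-one by noting that the "first" fan is obtained without using up any measure-constraint budget (the reference measure $\mu_1$ is bisected trivially in any chosen direction), and the remaining $k-1$ fans each consume a full set of $m-1$ constraints against the shrinking ambient sphere dimension, giving the clean form $\tfrac{n-1}{m-1}$ for the number of \emph{additional} fans. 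Combining, $\Omega^\perp(2;m,n)\geq\min\{\rho(2;\mathbb{R},n),\tfrac{n-1}{m-1}\}$.

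The main obstacle I expect is the analytic bookkeeping of degeneracies: the map $v\mapsto$ (bisecting level in direction $v$) is only defined and continuous away from the locus where some measure is concentrated on a hyperplane or where the bisecting hyperplane is non-unique, so to get an honest continuous $\mathbb{Z}_2$-equivariant section-compatible test map one must either perturb the measures and pass to a limit (using weak-$*$ compactness of probability measures and closedness of the "is a simultaneous bisector" condition), or work with set-valued maps and a selection theorem. The standard fix — approximate the $\mu_j$ by measures with smooth positive density, run the argument to get exact pairwise orthogonal simultaneous bisectors, then extract a convergent subsequence of frames in the compact Stiefel manifold and of hyperplanes in a one-point-compactified parameter space — is routine but is the step that needs the most care, and it is also where the absolute continuity hypothesis on the mass distributions is genuinely used.
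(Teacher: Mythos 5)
Your proposal gestures at the right ingredients (a $\mathbb{Z}_2$-equivariant frame field, Borsuk--Ulam, a dimension count against $n-1$), but the mechanism you describe is not the one that proves this bound, and as written it has a genuine gap. The paper's argument is \emph{not} an induction on shrinking spheres: one fixes $k=\min\{\rho(2;\mathbb{R},n),\frac{n-1}{m-1}\}$, takes a $\mathbb{Z}_2$-equivariant section $s(x)=(s_1(x)=x,s_2(x),\ldots,s_k(x))$ of $V_k(\mathbb{R}^n)$, and for \emph{every} $x\in S^{n-1}$ simultaneously produces $k$ pairwise orthogonal hyperplanes $H_1(x),\ldots,H_k(x)$ with normals $s_i(x)$, each bisecting the reference measure $\mu_1$ (Lemma 4.1 applied to each $s_i$). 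One then applies a single Borsuk--Ulam argument on $S^{n-1}$ to the map $x\mapsto(\mu_j(H_i^+(x)))_{2\leq j\leq m,\ 1\leq i\leq k}\in\mathbb{R}^{k(m-1)}\subseteq\mathbb{R}^{n-1}$, using $H_i^{\pm}(-x)=H_i^{\mp}(x)$; the coincidence $f(x)=f(-x)$ forces every $H_i(x)$ to bisect every $\mu_j$. Your sequential scheme --- choose $v_i$ in $\langle v_1,\ldots,v_{i-1}\rangle^{\perp}$ and then a bisector orthogonal to it --- does not use a section of $V_k(\mathbb{R}^n)$ at all: picking orthonormal vectors one at a time never requires a global frame field, so your claim that the orthonormality requirement ``is exactly a section of $V_k(\mathbb{R}^n)$'' is not correct. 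If carried out, that iteration would prove a different bound ($k\leq n-m+1$, with no $\rho$ term appearing), not the stated one. You cannot have it both ways: either the normals come from a frame field evaluated at a single point $x$ (the paper's route, which is what makes $\rho(2;\mathbb{R},n)$ enter), or they are produced one at a time by separate Borsuk--Ulam arguments on shrinking spheres, in which case the Stiefel bundle is irrelevant.

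Two smaller but real problems. First, your counting is backwards: you arrive at $(k-1)(m-1)\leq n-1$ and try to repair an off-by-one by declaring the first \emph{fan} free. In the correct argument it is the first \emph{measure} that is free --- $\mu_1$ is bisected by all $k$ hyperplanes by construction via Lemma 4.1 --- while all $k$ hyperplanes are tested against the remaining $m-1$ measures, giving $k(m-1)\leq n-1$, i.e.\ $k\leq\frac{n-1}{m-1}$ with no off-by-one to reconcile. Second, the degeneracy issue you flag does not require perturbation, weak-$*$ limits, or selection theorems: for each direction the set of bisecting levels $g_x^{-1}(\tfrac12\mu(\mathbb{R}^n))$ is a closed interval (by monotonicity and absolute continuity), and taking its midpoint $t(x)$ is already continuous and odd in $x$; this is exactly the content of Lemma 4.1.
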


\begin{theorem} For $q$ an odd prime,\\
(a) $\Omega^\perp(q;m, (q-1)n)\geq \min\{\rho(q;\mathbb{C},(q-1)n/2),\frac{n}{m}\}$ and \\
(b) $\Omega(q;m,(q-1)n)\geq \lceil \frac{1}{2}\min\{\rho(q;\mathbb{R}, (q-1)n), \frac{n}{m} \}\rceil$  \end{theorem}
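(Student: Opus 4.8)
The plan is to set up the configuration space/test-map scheme that turns the existence of $k$ equipartitioning regular $q$-fans into a $\mathbb{Z}_q$-equivariant section problem, and then invoke the sectioning hypotheses encoded in $\rho(q;\mathbb{C},\cdot)$ and $\rho(q;\mathbb{R},\cdot)$. First I would fix the standard identification $\mathbb{R}^{(q-1)n}\cong (\mathbb{R}[\mathbb{Z}_q]^{\mathrm{red}})^{\otimes?}$ — more precisely, view $\mathbb{R}^{q-1}$ as the reduced real regular representation $W_q$ of $\mathbb{Z}_q$ and set the ambient space to be $W_q\otimes\mathbb{R}^n \cong \mathbb{R}^{(q-1)n}$, so that a $\mathbb{Z}_q$-action is available on the domain in a way compatible with the rotational symmetry of a regular $q$-fan. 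A choice of regular $q$-fan through a fixed center, together with a labeling of its sectors, corresponds to a point of a sphere $S(W)$ for an appropriate sub-representation $W$; rotating the labels by $2\pi/q$ is the generator of $\mathbb{Z}_q$. The key reduction: for a single measure $\mu$, the map sending a labeled regular $q$-fan to the vector $(\mu(S_0)-\tfrac1q\mu(\mathbb{R}^n),\ldots,\mu(S_{q-1})-\tfrac1q\mu(\mathbb{R}^n))\in W_q$ is $\mathbb{Z}_q$-equivariant and continuous, and its zero set is exactly the set of fans equipartitioning $\mu$. This is the $q$-fan analogue of the classical Hobby--Rice / ham-sandwich test map; I would cite the standard such construction (as in the survey literature referenced, e.g.\ [9,10,17]).

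Next I would handle \emph{several} fans and \emph{several} measures simultaneously. For $k$ linearly independent $q$-fans one parametrizes $k$-tuples of mutually ``compatible'' fans by a Stiefel manifold: an orthonormal $k$-frame $(v_1,\ldots,v_k)$ in $\mathbb{F}^N$ (with $\mathbb{F}=\mathbb{R}$, $N=(q-1)n$ for part (b); $\mathbb{F}=\mathbb{C}$, $N=(q-1)n/2$ for part (a), using that a complex line in $\mathbb{C}^{N}$ carries the rotation-by-$2\pi/q$ structure needed to produce a regular $q$-fan directly) determines $k$ fans through a common center. A $\mathbb{Z}_q$-equivariant section of the Stiefel bundle $V_k(\mathbb{F}^N)\to S(\mathbb{F}^N)$ — whose existence is exactly the content of ``$\rho\geq k$'' — gives, over each sphere-parameter, a full orthonormal $k$-frame varying equivariantly. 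The section together with the test maps above yields a $\mathbb{Z}_q$-equivariant map from $S(\mathbb{F}^N)$ (or the relevant sphere, of dimension governed by $N$) to $(W_q)^{\oplus km}$, and I want to force this map to have a zero. The dimension/connectivity bookkeeping is where the $\tfrac{n}{m}$ (resp.\ $\tfrac{n-1}{m-1}$) terms enter: the sphere on which the section lives has dimension roughly $(q-1)n-1$ over $\mathbb{R}$, while the target for $k$ fans and $m$ measures has $\mathbb{Z}_q$-free part of dimension $(q-1)km$, and a Borsuk--Ulam-type argument (for $\mathbb{Z}_q$, $q$ an odd prime, via the ideal-valued index or Fadell--Husseini / Euler class of the relevant bundle) produces a zero provided $k\leq n/m$. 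For part (b), the factor $\lceil\frac12\,\cdot\rceil$ and the ceiling arise because a real orthonormal $2j$-frame (or $(2j{-}1)$-frame) must be paired up into $j$ two-dimensional blocks to build each $q$-fan's normal $2$-plane, so $k$ real frame vectors yield only $\lceil k/2\rceil$ fans, while part (a) avoids the halving because complex lines already supply $2$-planes with the correct rotational structure, at the cost of halving the ambient complex dimension.

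Concretely, the key steps in order: (1) fix the representation-theoretic model of regular $q$-fans and their normal planes inside $W_q\otimes\mathbb{R}^n$, carefully matching the $\mathbb{Z}_q$-action on sector-labels with the $\mathbb{Z}_q$-action on the Stiefel bundle's base and total space; (2) build, for one measure and one fan, the equivariant ``discrepancy'' test map into $W_q$ and verify continuity (standard, using absolute continuity of the $\mu_j$ to rule out degenerate fans and to get genuine equipartition rather than mere bisection of complements); (3) assemble, using a $\mathbb{Z}_q$-equivariant section of the appropriate bundle (existence $=$ hypothesis $\rho\geq k$), a global equivariant map $\Phi\colon S \to (W_q)^{\oplus km}$ where $S$ is the base sphere of the Stiefel bundle; (4) compute the relevant equivariant obstruction — Euler class of $(W_q)^{\oplus km}$ as a $\mathbb{Z}_q$-bundle over $S/\mathbb{Z}_q$ (a Lens space), or the Fadell--Husseini index — and show it is nonzero exactly when $k\cdot(\text{fans-per-frame-vector})^{-1}\cdot m \leq n$, i.e.\ when $k\le \min\{\rho,\ n/m\}$ in (a) and $2k-1\le \min\{\rho,\ n/m\}$, giving the ceiling, in (b); (5) conclude that $\Phi$ has a zero, translate it back to $k$ linearly independent (resp.\ orthogonal, since the frame is orthonormal) regular $q$-fans equipartitioning every $\mu_j$. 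The main obstacle I anticipate is step (4): getting the equivariant cohomology / Euler class computation over the Lens space to interact correctly with the \emph{nontrivial} bundle coming from the section, so that the zero of $\Phi$ is guaranteed — in particular, ensuring the section's presence does not itself kill the Euler class, and tracking the precise numerology that produces $n/m$ together with the $\lceil\cdot/2\rceil$ in the real case versus the clean bound in the complex case. The comparison $\Omega^\perp\le\Omega$ and the orthogonality claim are then immediate from orthonormality of the constructed frame.
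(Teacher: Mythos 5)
Your overall strategy -- parametrize tuples of regular $q$-fans by a sphere, use a $\mathbb{Z}_q$-equivariant section of a Stiefel bundle to produce $k$ fans per parameter, and force simultaneous equipartition by a $\mathbb{Z}_q$-Borsuk--Ulam argument -- is the paper's strategy. But two of your concrete choices break the proof. First, you parametrize fans ``through a fixed center'' by $S(W)$ and frames ``through a common center'' by $V_k(\mathbb{F}^N)$ over $S(\mathbb{F}^N)$; the translational degree of freedom of each fan is missing. A regular $q$-fan constrained to pass through the origin cannot equipartition an arbitrary mass distribution, and even setting that aside, your own dimension count then fails: with base sphere of dimension $(q-1)n-1$ and target $W_q^{\oplus km}$ of dimension $(q-1)km$, the connectivity/Euler-class argument only forces a zero when $km\le n-1$, not $km\le n$. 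The paper fixes both problems at once by adjoining one extra \emph{complex} coordinate $u_0$ encoding the offset of the fans, so the test map lives on $S^{(q-1)n+1}$ minus a finite degenerate set $X_q$ (homotopy equivalent to a wedge of $(q-1)n$-spheres), and the coincidence theorem (Theorem 5.2, derived from the $\mathbb{Z}_q$-Borsuk--Ulam theorem of [23] by an averaging over roots of unity) applies exactly when $km\le n$. Your step (4), which you flag as the main obstacle, is precisely this input, and you leave it uncomputed.

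Second, your mechanism for part (b) is wrong. You propose to pair up real frame vectors into two-dimensional blocks to serve as the normal $2$-planes of the fans; but the real span of two orthonormal vectors $v_1,v_2\in\mathbb{R}^{2t}$ carries no rotation-by-$2\pi/q$ structure compatible with the global $\mathbb{Z}_q$-action on $V_k(\mathbb{R}^{2t})$, which acts by complex scalar multiplication and hence rotates each $v_i$ inside its own complex line $\mathbb{C}v_i$. The paper instead builds one \emph{complex} regular $q$-fan per real frame vector $s_i$ (its normal plane being the complex line $\mathbb{C}s_i\subseteq\mathbb{C}^t$), so that all $k$ fans equipartition every measure; the halving enters only afterward, through a linear-algebra lemma (Proposition 5.1) asserting that among $k$ real-linearly-independent vectors in $\mathbb{R}^{2t}$ at least $\lceil k/2\rceil$ are complex-linearly-independent in $\mathbb{C}^t$, which is what guarantees that $\lceil k/2\rceil$ of the resulting fans are linearly independent. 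Without that lemma (or an equivalent), your account of where the $\lceil\cdot/2\rceil$ comes from does not yield the stated bound.
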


\begin{theorem} (a) $\Omega^\perp(4;1,2n)\geq \min\{\rho(4;\mathbb{C},n), 2n-1\}$ and \\
(b) $\Omega(4;1,2n)\geq \lceil\frac{1}{2}\min\{\rho(4;\mathbb{R},2n), 2n-1\}\rceil$\\\end{theorem}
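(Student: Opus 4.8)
The plan is to run the configuration-space/test-map scheme with a Stiefel section built in, as one would for Theorems 2.1--2.2; the genuinely new feature is that $q=4$ is composite, so the summand of the $\mathbb{Z}_4$-representation carrying the test function is the (non-free) sign representation, and the cutoff $2n-1$ reflects a $\mathbb{Z}_2$-flavoured Borsuk--Ulam bound rather than a lens-space one. First I would identify $\mathbb{R}^{2n}$ with $\mathbb{C}^n$, so $\mathbb{Z}_4=\langle i\rangle$ acts freely on $S^{2n-1}=S(\mathbb{C}^n)$, and record the dictionary: a regular $4$-fan whose normals span the complex line $\mathbb{C}w$ is precisely an ordered pair of orthogonal hyperplanes $\{x:\operatorname{Re}\langle x,w\rangle=a\}$, $\{x:\operatorname{Im}\langle x,w\rangle=b\}$, with the four quadrants $S_0,\dots,S_3$ as its regular sectors, and replacing $w$ by $iw$ rotates the $S_j$ cyclically by one step. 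Choosing $a=a(w)$, $b=b(w)$ to be the bisecting offsets of $\mu$ — which I first make unique and continuous in $w$ by passing to a nearby full-support density, recovering the general case by a compactness/limiting argument on the resulting fans — I set $g(w)=\mu(S_0(w))-\tfrac14$. Since the two hyperplanes bisect $\mu$ one has $\mu(S_0)=\mu(S_2)$ and $\mu(S_1)=\mu(S_3)=\tfrac12-\mu(S_0)$, so $g(w)=0$ exactly when the $4$-fan of $w$ equipartitions $\mu$; and from $\operatorname{Re}\langle x,iw\rangle=\operatorname{Im}\langle x,w\rangle$ and $\operatorname{Im}\langle x,iw\rangle=-\operatorname{Re}\langle x,w\rangle$ one checks $S_0(iw)=S_1(w)$, whence $g(iw)=-g(w)$. (Equivalently $g$ is the normalized $\ell=2$ Fourier coefficient of the sector-mass vector, the one on which $\mathbb{Z}_4$ acts through its sign character; the $\ell=1$ coefficient has already been annihilated by the choice of bisecting offsets.)

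For part (a), fix $k\le\rho(4;\mathbb{C},n)$ and a $\mathbb{Z}_4$-equivariant section $\sigma\colon S^{2n-1}\to V_k(\mathbb{C}^n)$, $\sigma(v)=(v,\sigma_2(v),\dots,\sigma_k(v))$ with $\sigma(iv)=i\,\sigma(v)$, and form the test map $\Psi\colon S^{2n-1}\to\mathbb{R}^k$, $\Psi(v)=\bigl(g(v),g(\sigma_2(v)),\dots,g(\sigma_k(v))\bigr)$, which is $\mathbb{Z}_4$-equivariant for the diagonal sign action on $\mathbb{R}^k$. A zero $v_0$ of $\Psi$ produces a unitary $k$-frame $\bigl(v_0,\sigma_2(v_0),\dots,\sigma_k(v_0)\bigr)$ every one of whose complex lines supports an equipartitioning $4$-fan, and those $k$ fans are pairwise orthogonal by construction; hence $\Omega^\perp(4;1,2n)\ge k$. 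So everything reduces to the Borsuk--Ulam statement that a $\mathbb{Z}_4$-equivariant map $S^{2n-1}\to\mathbb{R}^k$ with the sign action on the target must vanish when $k\le 2n-1$ — i.e.\ that the relevant $\mathbb{Z}_4$-index of $S^{2n-1}$ for the sign representation equals $2n-1$. Here $i^2=-1$ acts trivially on the target, so $\Psi$ descends to a $\mathbb{Z}_2$-equivariant map out of $(\mathbb{R}P^{2n-1},j)$, where $j\colon[v]\mapsto[iv]$ is the free involution with orbit space the lens space $L^{2n-1}(4)$, and a nonvanishing such map would yield a map $L^{2n-1}(4)\to\mathbb{R}P^{k-1}$ carrying the generator of $H^1(\mathbb{R}P^{k-1};\mathbb{Z}_2)$ to the class $x$ of the double cover $\mathbb{R}P^{2n-1}\to L^{2n-1}(4)$; I would exclude this for $k\le 2n-1$ by obstruction theory against the cell structure of $L^{2n-1}(4)$ (equivalently, by the $KO$-theoretic obstruction to the line bundle $\lambda$ with $w_1(\lambda)=x$ embedding in a trivial $k$-plane bundle), the only live obstruction sitting over the top cell in $\pi_{k-1}(S^{k-1})$.

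For part (b), replace the complex section by a $\mathbb{Z}_4$-equivariant section of the real Stiefel bundle $V_k(\mathbb{R}^{2n})\to S^{2n-1}$, $k\le\rho(4;\mathbb{R},2n)$, i.e.\ $k$ orthonormal $\mathbb{Z}_4$-equivariant vector fields $v=\tau_1,\dots,\tau_k$ with $\tau_j(iv)=i\,\tau_j(v)$. Because any $k$ real-orthonormal vectors span a complex subspace of dimension at least $\lceil k/2\rceil$, a fibrewise complex Gram--Schmidt process distills from these $\lceil k/2\rceil$ complex-orthonormal $\mathbb{Z}_4$-equivariant vector fields — this is where the factor $\tfrac12$ and the ceiling enter — after which the argument of part (a) with $r=\lceil k/2\rceil$ in place of $k$ applies verbatim, the Borsuk--Ulam input now needing only $r\le 2n-1$, automatic since $r\le\lceil(2n-1)/2\rceil=n$. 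This yields $\lceil\tfrac12\min\{\rho(4;\mathbb{R},2n),2n-1\}\rceil$ pairwise orthogonal — in particular linearly independent — equipartitioning $4$-fans.

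I expect the Borsuk--Ulam step to be the main obstacle, precisely because $q=4$ is not prime: the test function lands in the sign representation, not a free one, so no lens-space Borsuk--Ulam theorem applies directly and one must pin down the sign-representation index of $S^{2n-1}$, which is the source of the ``$\min$'' in both parts. The remaining points are comparatively routine: the reduction of the offsets to a continuous function of the direction, and — in part (b) — checking that the complex Gram--Schmidt step stays $\mathbb{Z}_4$-equivariant and non-degenerate (degeneracy would force the real section into the special position $\tau_j(v)\in\mathbb{R}\cdot iv$, avoidable by a small equivariant perturbation of $\mu$ and of the section).
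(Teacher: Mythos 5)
Your overall architecture is the same as the paper's (a $\mathbb{Z}_4$-equivariant Stiefel section, bisecting offsets to kill the translation degrees of freedom, a test function $g$ with $g(iw)=-g(w)$, a Borsuk--Ulam statement for the sign representation of $\mathbb{Z}_4$, and a real-to-complex independence count for part (b)), but the step you defer as ``the main obstacle'' is in fact the heart of the theorem, and your sketch of it does not go through as stated. A nonvanishing equivariant map does descend, as you say, to a map $L^{2n-1}(4)\to\mathbb{R}P^{k-1}$ pulling back the generator $a\in H^1(\mathbb{R}P^{k-1};\mathbb{Z}_2)$ to the class $x$ of the double cover $\mathbb{R}P^{2n-1}\to L^{2n-1}(4)$; however, since $4\equiv 0\ (\mathrm{mod}\ 4)$ one has $x^2=0$ in $H^*(L^{2n-1}(4);\mathbb{Z}_2)$, so no cup-power or Stiefel--Whitney-class argument obstructs such a map, and the existence of the map is genuinely only obstructed over the top cell. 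You correctly locate the obstruction there but never compute it; the paper does exactly this computation (its Theorem 5.4) by showing that a map $h:S^{2n-1}\to S^{2n-2}\subseteq S^{2n-1}$ with $h(ix)=-h(x)$ must have degree $\equiv 2\ (\mathrm{mod}\ 4)$ (via the induced map $L^{2n-1}(4)\to\mathbb{R}P^{2n-1}$ on fundamental groups and $\mathbb{Z}_2$-cohomology), contradicting nullhomotopy. Without that computation your proof of both parts is incomplete precisely where the prime-power subtlety you flag actually bites.

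The second gap is in part (b): a ``fibrewise complex Gram--Schmidt'' on the $k$ equivariant real-orthonormal fields is not a continuous operation, because which $\lceil k/2\rceil$ of the $\tau_j(v)$ are complex-independent, and the rank of the complex span of $\tau_1(v),\dots,\tau_j(v)$, can jump as $v$ varies; the degeneracy is not confined to the special position $\tau_j(v)\in\mathbb{R}\cdot iv$, and a ``small equivariant perturbation of the section'' is not available (it would destroy orthonormality and, more seriously, a perturbed family of complex-orthonormal equivariant fields would be a section of $V_{\lceil k/2\rceil}(\mathbb{C}^n)$, whose existence is exactly what you cannot assume). Note also that if your reduction worked it would upgrade part (b) to pairwise \emph{orthogonal} fans, i.e.\ a bound on $\Omega^\perp$ strictly stronger than the stated $\Omega$ bound --- a sign something is off. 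The paper sidesteps this entirely: it runs the $k$-component test map using all $k$ real fields, finds a single point $x$ where all $k$ fans equipartition $\mu$, and only then applies the linear-algebra fact (its Proposition 5.1) \emph{pointwise at that $x$} to conclude that at least $\lceil k/2\rceil$ of the resulting fans are linearly independent. Your part (b) should be restructured the same way. The remaining ingredients of your write-up (the fan/complex-line dictionary, the identity $g(iw)=-g(w)$, the continuity of the bisecting offsets --- which the paper handles more cleanly by taking the midpoint of the interval of bisecting values rather than a perturbation-and-limit argument) are fine.
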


\subsection{Some Calculations}

	We give some estimates on $\Omega^\perp(q;m,n)$, exact in a number of cases, which follow as consequences of the above theorems. First, note that Theorem 2.1 and the estimate (1.1) show that $\Omega^\perp(2;n,n)=1$, thereby recovering the well-known Ham Sandwich Theorem - any $n$ mass distributions on $\mathbb{R}^n$ can be bisected by a single hyperplane. When $q$ is an odd prime, Theorem 2.2 yields that $\Omega(q; n, (q-1)n)\geq 1$ (see also [23]) and $\Omega^\perp(q;n, 2(q-1)n)\geq2$. In particular, one has the following corollary:
	
\begin{corollary} For $q$ an odd prime,\\  
(a) $\Omega(q;1,q-1)\geq 1$ and \\ 
(b) $\Omega^\perp(q; 1, 2q-2)\geq 2$ \end{corollary}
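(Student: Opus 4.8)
The plan is to obtain both parts as the first nontrivial specializations of Theorem 2.2, so that everything reduces to lower bounds on the relevant $\rho$, i.e.\ to the existence of suitable equivariant sections.

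For (a), take $m=n=1$ in Theorem 2.2(b): since $(q-1)n=q-1$ and $n/m=1$, this gives $\Omega(q;1,q-1)\ge\lceil\tfrac{1}{2}\min\{\rho(q;\mathbb{R},q-1),1\}\rceil$, and the right-hand side equals $1$ as soon as $\rho(q;\mathbb{R},q-1)\ge1$. This last bound is free: for $k=1$ the bundle (3.3) retains only the first frame vector, hence carries the tautological (identity) section over $S(\mathbb{R}^{q-1})$. Thus (a) holds, recovering the known statement (see also [23]) that a single measure on $\mathbb{R}^{q-1}$ admits an equipartitioning regular $q$-fan.

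For (b), take $m=1$, $n=2$ in Theorem 2.2(a): then $(q-1)n=2q-2$, $(q-1)n/2=q-1$, and $n/m=2$, so $\Omega^\perp(q;1,2q-2)\ge\min\{\rho(q;\mathbb{C},q-1),2\}$, and it suffices to produce a $\mathbb{Z}_q$-equivariant section of the complex Stiefel bundle $V_2(\mathbb{C}^{q-1})\to S(\mathbb{C}^{q-1})$ (equivalently, of the quotient bundle (3.4) over the associated lens space). Such a section carries $v$ to a frame whose first vector is $v$, so what is needed is one equivariant map $g\colon S(\mathbb{C}^{q-1})\to S(\mathbb{C}^{q-1})$ with $g(v)\perp v$ for every $v$. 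I would model $\mathbb{C}^{q-1}$ as the reduced complex regular representation of $\mathbb{Z}_q$ (the complexification of the degree-$(q-1)$ real representation of $\mathbb{Z}_q$ underlying the $q$-fan configuration spaces), i.e.\ as $\{f\colon\mathbb{Z}_q\to\mathbb{C}\mid\sum_x f(x)=0\}$ with $\mathbb{Z}_q$ acting by translation, equivalently $\bigoplus_{j=1}^{q-1}\mathbb{C}_j$ in Fourier coordinates ($\mathbb{C}_j$ the character $\zeta\mapsto\zeta^{j}$). Then I would set $g(v)=\overline{v}*w$: pointwise complex conjugation followed by convolution against a fixed $w$ chosen so that $|\widehat{w}_j|=1$ and $\widehat{w}_{-j}=-\widehat{w}_j$ for all $j\ne 0$ (say $\widehat{w}_j=1$ for $1\le j\le(q-1)/2$ and $\widehat{w}_j=-1$ otherwise).

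One checks directly that $g$ is $\mathbb{Z}_q$-equivariant (convolution by a fixed element and pointwise conjugation each respect the translation action), norm-preserving (because $|\widehat{w}_j|=1$), and sends $v$ to a mean-zero vector with $\langle g(v),v\rangle=0$ (the Fourier expression $\sum_j\widehat{w}_j\,\overline{\widehat{v}_j\widehat{v}_{-j}}$ cancels in pairs $j,-j$ since $\widehat{w}_{-j}=-\widehat{w}_j$). Hence $v\mapsto(v,g(v))$ is the desired equivariant orthonormal $2$-frame field, $\rho(q;\mathbb{C},q-1)\ge2$, and (b) follows. The main obstacle I foresee is bookkeeping rather than depth: reconciling this explicit model of $\mathbb{C}^{q-1}$ and of $g$ with the precise $\mathbb{Z}_q$-action on the bundle (3.4) fixed in Section 3, the sign and conjugation conventions in particular. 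Should those models differ, the fallback is obstruction theory over the lens space $L=S(\mathbb{C}^{q-1})/\mathbb{Z}_q$: the fiber $S^{2q-5}$ of (3.4) is $(2q-6)$-connected, so the primary obstruction to a section is the top Chern class $c_{q-2}$ of the complementary bundle $\lambda^\perp$, lying in $H^{2q-4}(L;\mathbb{Z})\cong\mathbb{Z}_q$, and it vanishes because $c(\lambda^\perp)=\prod_{j=1}^{q-1}(1+jx)\equiv 1-x^{q-1}\pmod q$ by Wilson's theorem while $x^{q-1}\in H^{2q-2}(L)=0$; one is then left with a secondary obstruction in $H^{2q-3}(L;\pi_{2q-4}(S^{2q-5}))\cong\mathbb{Z}/2$ (for $q\ge 5$; absent for $q=3$), and it is precisely to avoid having to kill this class that I would favor the explicit conjugation-and-convolution section.
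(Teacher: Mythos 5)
Part (a) is exactly the paper's argument: specialize Theorem 2.2(b) to $m=n=1$ and note that $\rho(q;\mathbb{R},q-1)\geq 1$ is automatic because (3.3) with $k=1$ is the identity bundle. Part (b) also follows the paper's reduction --- Theorem 2.2(a) with $m=1$, $n=2$ leaves only the claim $\rho(q;\mathbb{C},q-1)\geq 2$ --- but you build the required $2$-frame field differently. The paper simply observes that $q-1$ is even, writes $\mathbb{C}^{q-1}\cong\mathbb{H}^{(q-1)/2}$, and takes $s(x)=(x,jx)$, which satisfies $s(\lambda x)=(\lambda x,\lambda^{-1}jx)$ and so is equivariant for the action of Section 3.1 with $r\equiv -1$. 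Your map $g(v)=\overline{v}*w$ is in fact this same section in Fourier clothing: from $|\widehat{w}_j|=1$ and $\widehat{w}_{-j}=-\widehat{w}_j$ one gets that $g$ is anti-unitary with $g^2=-\mathrm{id}$, i.e.\ a quaternionic structure on $\mathbb{C}^{q-1}$, so $(v,g(v))$ is unitarily equivalent to $(x,jx)$; your orthogonality and norm computations are correct. Two cautions on the ``bookkeeping'' you defer, one of which is substantive. First, the equivariance actually consumed by the proof of Theorem 2.2 is \emph{scalar} equivariance $g(\zeta_q v)=\zeta_q^{r}g(v)$ for the action $\zeta_q\cdot(v_1,v_2)=(\zeta_q v_1,\zeta_q^{r}v_2)$, not equivariance for the translation action on the reduced regular representation, which is what you verify. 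Your $g$ does satisfy the former, with $r\equiv-1$, but only because it is conjugate-linear ($g(\lambda v)=\overline{\lambda}\,g(v)$); that is the property to record. Second, the twist $r\equiv-1$ is not cosmetic: for $r=1$ the relevant complementary bundle over the standard lens space $L^{2q-3}(q)$ has Euler class $(q-1)x^{q-2}\neq 0$ in $H^{2q-4}(L^{2q-3}(q);\mathbb{Z})\cong\mathbb{Z}_q$, so no such section exists, and your fallback computation $c(\lambda^{\perp})=\prod_{j=1}^{q-1}(1+jx)$ pertains to the generalized lens space with weights $1,\dots,q-1$ rather than to the bundle (3.4). Keep the explicit section (restated as scalar-equivariant with $r\equiv -1$) and drop the obstruction-theoretic fallback as written.
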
 

	Thus any mass distribution on $\mathbb{R}^{q-1}$ can be bisected by a regular $q$-fan, while for any mass distribution on $\mathbb{R}^{2q-2}$ there exists a pair of orthogonal $q$-fans, each of which equipartitions the measure. In general, our best estimates occur for $\mathbb{R}^n$ when $n$ is a relatively small even number, in which case the quotiented Stiefel bundles admit a relatively large number of sections:

\begin{equation}\begin{matrix}
\Omega(3;1,2)=1  &  \Omega(4;1,2)=1 & \Omega^\perp(2;2,4) =3\\ 
\Omega^\perp(3;1,4) =2 & \Omega^\perp(4;1,4) =2 & \Omega(5;1,4)\geq 1\\ 
\Omega (3;2,4)=1 & \Omega (7;1,6)\geq 1 & \Omega^\perp(2;2,8)=7\\ 
\Omega^\perp(2;3,8)\geq 3 & \Omega(4;1,8)\geq 3 & \Omega^\perp(3;2,8)\geq 2\\
\Omega^\perp(5;1,8)\geq 2 & \Omega(11;1,10)\geq 1 & \Omega^\perp(7; 1,12)\geq 2\\ 
\Omega^\perp (2;2,16)\geq 9 & \Omega(3;1,16)\geq 4 & \Omega(4;1,16)\geq 4 
\end{matrix} \end{equation} 	

	In section 6, we give a detailed comparison of the above estimates with previous estimates of $\Omega(q;m,n)$ and $\Omega^\perp(q;m,n)$, as well as how they compare to the results of some related questions. The hyperplane case and its proof is discussed in section 4, while the regular $q$-fan case for odd primes $q$ is handled in section 5,  as is the case of regular 4-fans and what interesting (near) equipartition statements can be made be for regular $2q$-sectors when $q$ is an odd prime (Theorem 5.5.). First, we provide some justification for the topological nature of our lower bounds. 

\section{Topological Justification}

	The lower bounds of Theorems 2.1, 2.2, and 2.3 depend directly on the geometric topology of Real Projective Space $\mathbb{R}P^{n-1}$ and the standard Lens Spaces $L^{2n-1}(q)$. Our first indication that these manifolds are naturally involved in answering Question 1.1 arises from a canonical identification of  the space $\mathfrak{F}(2;n)$ of all hyperplanes in $\mathbb{R}^n$ with the tautological real line bundle over $\mathbb{R}P^{n-1}$, and of the space $\mathfrak{F}_{\mathbb{C}}(q; 2n)$ of all \textit{complex} regular $q$-fans in $\mathbb{R}^{2n}$ with the tautological complex line bundle over $L^{2n-1}(q)$. In other words, these regular $q$-fans are the natural combinatorial objects associated to these spherical space forms.

	To see these identifications, note that each hyperplane $H(\mathbf{a},b)=\{\mathbf{u}\in\mathbb{R}^n\mid \langle \mathbf{u}, \mathbf{a}\rangle=b\}$ in $\mathbb{R}^n$ is uniquely determined by a pair of antipodal points $\{(\mathbf{a},b), (-\mathbf{a},-b)\}$ in $S^{n-1}\times\mathbb{R}$, so that  $\mathfrak{F}(2;n)$ is realizable as the orbit space of $S^{n-1}\times\mathbb{R}$ under the diagonal $\mathbb{Z}_2$-antipodal action, i.e., with the canonical line bundle $E(\gamma)=(S^{n-1}\times\mathbb{R})/\mathbb{Z}_2$ over $\mathbb{R}P^{n-1}$.  Letting $\langle\mathbf{z},\mathbf{a}\rangle_{\mathbb{C}}=\sum_{i=1}^nz_i\bar{a}_i$ denote the standard Hermitian form on $\mathbb{C}^n$, one similarly sees that each complex regular $q$-fan in $\mathbb{R}^{2n}$ (i.e., one centered about a complex hyperplane) is of the form $F^{\mathbb{C}}_q(\mathbf{a},b)=\{\mathbf{z}\in\mathbb{C}^n\mid\langle\mathbf{z},\mathbf{a}\rangle_{\mathbb{C}}=\bar{b} + v; \arg(v)=2\pi j/q, 0\leq j <q\}$. Thus each complex regular $q$-fan can be uniquely identified with an element of  the orbit space of $S^{2n-1}\times\mathbb{C}$ under the standard diagonal $\mathbb{Z}_q$-action, i.e, with an element of the tautological complex line bundle $E(\gamma_q)=(S^{2n-1}\times\mathbb{C})/\mathbb{Z}_q$ over $L^{2n-1}(q)=S^{2n-1}/\mathbb{Z}_q$.

\subsection{Stiefel Bundles and their Quotients} Let $\mathbb{F}=\mathbb{R}$ or $\mathbb{C}$. The Stiefel manifold $V_k(\mathbb{F}^n)$ consists of all $k$-frames of $\mathbb{F}^n$, i.e., $k$-tuples $(v_1,\ldots,v_k)$ of orthonormal vectors of $\mathbb{F}^n$, and is topologized as a subset of the $k$-fold product $S(\mathbb{F}^n)\times\ldots\times S(\mathbb{F}^n)$, where $S(\mathbb{F}^n)$ is the unit sphere in $\mathbb{F}^n$ (see, e.g., [13]). In particular, $V_1(\mathbb{F}^n)= S(\mathbb{F}^n)$, and there is a fiber bundle 

\begin{equation} V_{k-1}(\mathbb{F}^{n-1})\hookrightarrow V_k(\mathbb{F}^n) \stackrel{\pi}{\rightarrow} S(\mathbb{F}^n) \end{equation}
	
\noindent given by projecting a $k$ frame $(v_1,\ldots,v_k)$ onto $\pi(v_1,\ldots, v_k) =v_1$. 

	Diagonally extending the antipodal action on $S^{n-1}$ gives a free $\mathbb{Z}_2$-action on $V_k(\mathbb{R}^n)$: $-1\cdot (v_1,\ldots, v_k) = (-v_1, \ldots, -v_k)$, and quotienting (3.1) by this action yields a fiber bundle 
	
	\begin{equation} V_{k-1}(\mathbb{R}^{n-1})/\mathbb{Z}_2\hookrightarrow V_k(\mathbb{R}^n)/\mathbb{Z}_2\stackrel{\bar{\pi}}{\rightarrow} \mathbb{R}P^{n-1} \end{equation} 

\noindent over $\mathbb{R}P^{n-1}$. Similarly, the free $\mathbb{Z}_q$-action on both $V_k(\mathbb{R}^{2n})$ and $V_k(\mathbb{C}^n)$ generated by $\zeta_q\cdot (v_1,v_2, \ldots, v_k) = (\zeta_qv_1, \zeta_q^rv_2, \ldots, \zeta_q^rv_k)$, $\zeta_q=e^{\frac{2\pi i q}{m}}$ and $1\leq r<q$ relatively prime to $q$, yields fiber bundles  over $L^{2n-1}(q)$:
	
	 \begin{equation} V_{k-1}(\mathbb{R}^{2n-1})/\mathbb{Z}_q \hookrightarrow V_k(\mathbb{R}^{2n})/\mathbb{Z}_q \stackrel{\bar{\pi}}{\rightarrow} L^{2n-1}(q) \end{equation}
	 
	 \begin{equation} V_{k-1}(\mathbb{C}^{n-1})/\mathbb{Z}_q \hookrightarrow V_k(\mathbb{C}^n)/\mathbb{Z}_q \stackrel{\bar{\pi}}{\rightarrow} L^{2n-1}(q) \end{equation} 
	 
We shall want to find a $\mathbb{Z}_q$-equivariant section of the bundle (3.1), or in other words a section of the corresponding quotient bundle (3.2), (3.3), and (3.4):  a continuous map $s: S(\mathbb{F}^n)\rightarrow V_k(\mathbb{F}^n)$ such that ($i$) $\pi\circ s(v)= v$ for each $v\in S(\mathbb{F}^n)$ (i.e., each $(v,s(v))$ is a $k$-frame in $\mathbb{F}^n$) and ($ii$) $(\zeta_q v, s(\zeta_q v)) = \zeta_q \cdot (v, s(v))$.  

	As a section of (3.2) is the same as the existence of $(k-1)$ orthonormal vector fields on $\mathbb{R}P^{n-1}$, we have $\rho(2;\mathbb{R},n)=Span(\mathbb{R}P^{n-1})+1$, where we recall that $Span(M^n)$ denotes the maximum number of linearly independent (equivalently, orthonormal) vector fields on a smooth manifold $M^n$. Likewise, a section of (3.3) is the same as the existence of $(k-1)$ orthonormal vector fields on $L^{2n-1}(q)$ when the $\mathbb{Z}_q$-action above on $V_k(\mathbb{R}^{2n})$ is standard (i.e., $r=1$), and in fact $\rho(q;\mathbb{R};2n) =$ Span$(L^{2n-1}(q))+1$ by the explicit calculations of Becker [7] and Szczarba [24] (see, e.g., [15] as well).  
	
	Owing to the identification of regular (complex) $q$-fans and the aforementioned tautological bundles, the connection between Question 1.1 and the tangent bundles of these manifolds can be therefore be glimpsed philosophically in terms of the fundamental relationship (see, e.g., [19, 24]) between the tangent bundles over these manifolds and the $n$-fold Whitney sum of their tautological bundles:
	
\begin{equation} T\mathbb{R}P^{n-1}\oplus \varepsilon^1 = E(\gamma)\oplus\ldots\oplus E(\gamma) \end{equation} and 
\begin{equation} TL^{2n-1}(q) \oplus \varepsilon^1 = Re(E(\gamma_q))\oplus\ldots\oplus Re(E(\gamma_q)), \end{equation} 

\noindent where $Re(E(\gamma_q)$ denotes the underlying 2-dimensional real bundle of $E(\gamma_q)$ and $\varepsilon^1$ is the trivial line bundle obtained by quotienting the normal bundle of the sphere. A similar analysis holds in the complex case by quotienting the complex tangent bundle $T_{\mathbb{C}}S^{2n-1}=\{(v,w)\mid \langle v, w\rangle_{\mathbb{C}}=0\} \subseteq S^{2n-1}\times\mathbb{C}^n$ by the $\mathbb{Z}_q$-action considered.\\

	Our proofs of the Theorems 2.1, 2.2., and 2.3  follow a similar pattern. A section of the relevant quotiented Stiefel bundle gives rise to a parametrized family of $k$-tuples of regular $q$-fans. Given an appropriate number of mass distributions, the algebraic topology of the related manifold, as realized by an appropriate equivariance statement, implies that each fan in at least one these collections must equipartition each of the given measures. In the case of hyperplanes, the orthonormality of the vector fields implies that these hyperplanes are orthogonal. For $q\geq 3$, the regular $q$-fans are orthogonal if one considers complex Stiefel manifolds, while one can ensure that half of the regular $q$-fans are linearly independent if one considers real Stiefel manifolds.
	
	It is worthwhile to note that our methods for establishing our lower bounds are in some sense the opposite of the usual topological constructions utilized in equipartition theory. In the configuration space/test-map procedure, one deduces the existence of a desired partition from the {\it non-existence} of an (equivariant) section of an appropriate bundle, while here the existence of such sections is used to establish our equipartition results. 
	
\section{Bisections by Orthogonal Hyperplanes}

As noted above, Theorem 2.1 recovers the Ham Sandwich Theorem when $m=n$, and in fact our proof of Theorem 2.1 reduces to the classical geometric proof of the Ham Sandwich Theorem as attributed to Banach by Steinhaus [8] in this case. For $m<n$, our best estimates occur when $Span(\mathbb{R}P^{n-1})$ is largest. It is a well-known fact that $\mathbb{R}P^{n-1}$ is parallelizable iff $n=2,4,8$, arising from multiplication in the division algebras of complex numbers, Quaternions, and Octonions. Thus one obtains the optimal values $\Omega^\perp(2;2,n)=n$ in these cases. For general $n=2^{a+4b}m$, $m$ odd and $0\leq a\leq 3$, the exact value of Span$(\mathbb{R}P^{n-1})= 2^a +8b$ was determined by Adams [1],  with vector fields arising from realizations of $\mathbb{R}^n$ as a real Clifford module (see, e.g., [3, 15]).  Thus  $\Omega^\perp(2;3,8)\geq 3$ because Span($\mathbb{R}P^7)=7$, while  $\Omega^\perp(2;2,16)\geq 9$ and $\Omega^\perp(2;3,16)\geq7$ since $Span(\mathbb{R}P^{15})=9$. Owing to the paucity of orthonormal vector fields on $\mathbb{R}P^{n-1}$,  however, our estimates on $\Omega^\perp(2;m,n)$ obtained in this fashion worsen as $n$ increases. In particular, one can compare our estimate $\Omega^\perp(2;2,n)\geq$ $\min\{$Span$(\mathbb{R}P^{n-1})+1, n-1\}$ with the known value $\Omega^\perp(2;2,n)=n-1$ given in [15]. 

\subsection{Proof of Theorem 2.1}

	We follow the program laid out in section 3. First, we have the following lemma. 
	
\begin{lemma} Let $\mu$ be a mass distribution on $\mathbb{R}^n$, and let $s: S^{n-1}\rightarrow S^{n-1}$ be a continuous $\mathbb{Z}_2$-equivariant map. Then there exists a continuous family $\{H(x)=H(-x)\}_{x\in S^{n-1}}$ of hyperplanes parametrized by $\mathbb{R}P^{n-1}$, each of which bisects $\mu$. Moreover, $H^\pm(-x)=H^\mp(x)$ for each pair of corresponding half-spaces.\end{lemma}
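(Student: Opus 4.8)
The plan is to produce the family directly from $s$, in two stages: first attach to every unit vector $\mathbf a\in S^{n-1}$ a canonical $\mu$-bisecting hyperplane $\widetilde H(\mathbf a)$ with normal $\mathbf a$, and then set $H(x)=\widetilde H(s(x))$. For the first stage, fix $\mathbf a$ and consider the distribution function $F_{\mathbf a}(t)=\mu(\{\mathbf u\in\mathbb R^n\mid\langle\mathbf u,\mathbf a\rangle\le t\})$. Since $\mu$ is absolutely continuous with respect to Lebesgue measure, every affine hyperplane is $\mu$-null, so $F_{\mathbf a}$ is continuous and nondecreasing in $t$, with $F_{\mathbf a}(t)\to 0$ as $t\to-\infty$ and $F_{\mathbf a}(t)\to\mu(\mathbb R^n)=:M$ as $t\to+\infty$ (uniformly in $\mathbf a$, since $\mu$ is finite). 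By the intermediate value theorem $L(\mathbf a):=F_{\mathbf a}^{-1}(M/2)$ is a nonempty compact interval; let $t(\mathbf a)$ be its midpoint, put $\widetilde H(\mathbf a)=\{\mathbf u\mid\langle\mathbf u,\mathbf a\rangle=t(\mathbf a)\}$, and orient its half-spaces by $\widetilde H(\mathbf a)^{\pm}=\{\mathbf u\mid\pm(\langle\mathbf u,\mathbf a\rangle-t(\mathbf a))\ge 0\}$. Each $\widetilde H(\mathbf a)$ bisects $\mu$ by construction.

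Granting for the moment that $\mathbf a\mapsto t(\mathbf a)$ is continuous, the rest is bookkeeping. The identity $F_{-\mathbf a}(s)=M-F_{\mathbf a}(-s)$ gives $L(-\mathbf a)=-L(\mathbf a)$, hence $t(-\mathbf a)=-t(\mathbf a)$, hence $\widetilde H(-\mathbf a)=\widetilde H(\mathbf a)$ as hyperplanes with $\widetilde H(-\mathbf a)^{\pm}=\widetilde H(\mathbf a)^{\mp}$. Now define $H(x):=\widetilde H(s(x))$ for $x\in S^{n-1}$; it is continuous as a composite, and each $H(x)$ bisects $\mu$. Since $s$ is $\mathbb Z_2$-equivariant, $s(-x)=-s(x)$, so $H(-x)=\widetilde H(-s(x))=\widetilde H(s(x))=H(x)$ and $H^{\pm}(-x)=H^{\mp}(x)$; thus $H$ factors through $S^{n-1}/\mathbb Z_2=\mathbb R P^{n-1}$ and defines the required family in the space $\mathfrak F(2;n)$ of hyperplanes.

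It remains to argue the continuity of $\mathbf a\mapsto t(\mathbf a)$. The key input is that $F_{\mathbf a}(t)$ is jointly continuous in $(\mathbf a,t)$: if $(\mathbf a_k,t_k)\to(\mathbf a,t)$ then $\mathbf 1_{\langle\mathbf u,\mathbf a_k\rangle\le t_k}\to\mathbf 1_{\langle\mathbf u,\mathbf a\rangle\le t}$ for $\mu$-a.e.\ $\mathbf u$, the exceptional set being the $\mu$-null hyperplane $\langle\mathbf u,\mathbf a\rangle=t$, so dominated convergence applies. In particular, for $\mathbf a_k\to\mathbf a$ the monotone functions $F_{\mathbf a_k}$ converge pointwise to the continuous function $F_{\mathbf a}$, hence converge locally uniformly; combining this with the uniform behavior at $\pm\infty$ one controls the endpoints of $L(\mathbf a_k)$, and therefore the midpoint $t(\mathbf a_k)$, giving $t(\mathbf a_k)\to t(\mathbf a)$.

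The main obstacle is precisely this last continuity statement, and it is sensitive to the selection made when the bisector is not unique. If $\mu$ has a.e.\ positive density then each $F_{\mathbf a}$ is strictly increasing, $L(\mathbf a)$ is a single point, and continuity of $t(\mathbf a)$ drops out immediately from the joint continuity of $F$ (roots of a continuous function that is strictly monotone in one variable depend continuously on parameters). For a general mass distribution $F_{\mathbf a}$ may be flat at height $M/2$, and one must check carefully that the midpoint of $L(\mathbf a)$ still varies continuously; the local-uniform-convergence observation above is the tool I would use for this, and it is the one step I expect to require genuine care. Everything else in the proof is formal.
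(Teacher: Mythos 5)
Your proposal is correct and follows essentially the same route as the paper: define the half\mbox{-}space measure $g_x(t)=\mu(H^+(x,t))$, use monotonicity and the intermediate value theorem to get a closed interval of bisecting levels, select its midpoint $t(x)$, and read off the antipodal identities $t(-x)=-t(x)$, $H^\pm(-x)=H^\mp(x)$ from $s(-x)=-s(x)$. The one step you flag as delicate --- continuity of the midpoint selection when the bisecting level is non-unique --- is indeed the only non-formal point, and the paper simply asserts it without the local-uniform-convergence discussion you sketch, so your treatment is if anything more careful than the source.
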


\begin{proof} Define a family of hyperplanes $H(x,t)=\{u\in\mathbb{R}^n\mid \langle u, s(x) \rangle = t\}$ and half-spaces $H^+(x,t)=\{\langle u, s(x)\rangle \geq t\}$ and $H^-(x,t)=\{\langle u, s(x)\rangle \leq t\}$  for each $(x,t) \in S^{n-1}\times\mathbb{R}$. As $\mu$ is a mass distribution, the association  $(x,t)\mapsto \mu(H^+(x,t))$ is continuous, while $H^\pm(-x,-t) = H^-(x,t)$ for each $(x,t)$ because $s(-x)=-s(x)$ by assumption. 
 
 	For each $x\in S^{n-1}$, define $g_x: \mathbb{R}\rightarrow [0,\infty)$ by $g_x(t) = \mu(H^+(x,t))$. The monotonicity properties of $\mu$ show that $g_x$ is monotone decreasing, with $\lim_{t\to-\infty}g_x(t) = \mu(\mathbb{R}^n)$ and $\lim_{t\to+\infty} g_x(t) = \mu(\emptyset) = 0$. Hence $g_x^{-1}(\frac{1}{2}\mu(\mathbb{R}^n))$ is a closed interval by the intermediate value theorem and the monotonicity of $g_x$, and letting $t(x)$ be the center of this interval defines a  continuous function $t:S^{n-1}\rightarrow \mathbb{R}$. As $t(-x)=-t(x)$, defining $H(x):=H(x,t(x))$ and $H^\pm(x):=H^\pm(x,t(x))$ gives the desired family of parametrized hyperplanes and half-spaces. \end{proof}

	We now prove Theorem 2.1. Let $\mu_1,\ldots, \mu_m$ be a collection of mass distributions on $\mathbb{R}^n$. For $k=\min\{\rho(2;\mathbb{R},n), \frac{n-1}{m-1}\}$, let $s(x)= (s_1(x), \ldots, s_k(x))$ be a $\mathbb{Z}_2$-equivariant map section of $V_k(\mathbb{R}^n)$, where $s_1(x)=x$. Letting $H_i(x)$ denote the hyperplane corresponding to $s_i(x)$ which bisects  $\mu_1$ as given by (the proof of) Lemma 4.1, we see that $H_1(x),\ldots, H_k(x)$ are pairwise orthogonal by the orthonormality of the $s_i(x)$. Defining $f=(f_2,\ldots,f_k): S^{n-1} \rightarrow \mathbb{R}^{k(m-1)} \subseteq \mathbb{R}^{n-1}$ by $f_j(x) = (\mu_j(H_1^+(x)), \ldots, \mu_j(H_k^+(x)))$ for each $2\leq j\leq m$, applying the Borsuk-Ulam Theorem below shows there exists some pair $\{x,-x\}\subseteq S^{n-1}$ for which $\mu_j(H_i^-(x)) = \mu_j(H_i^+(-x)) = \mu_j(H_i^+(x))$ for each $2\leq j \leq m$ and each $1\leq i \leq k$, thereby finishing the proof.
	
\begin{theorem} The Borsuk-Ulam Theorem\\ 
Let $f: S^{n-1}\rightarrow \mathbb{R}^{n-1}$ be a continuous map. Then $f(-x)=f(x)$ for some pair $\{\pm x\}\subseteq S^{n-1}$. \end{theorem}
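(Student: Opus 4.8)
The statement is classical, so the plan is mainly a matter of packaging. The case $n=1$ is immediate, since $\mathbb{R}^{0}$ is a single point and the two points of $S^{0}$ both map to it; so assume $n\geq 2$ and argue by contradiction. Suppose $f(x)\neq f(-x)$ for every $x\in S^{n-1}$. Then the denominator below never vanishes, and
\[
g(x)=\frac{f(x)-f(-x)}{\lVert f(x)-f(-x)\rVert}
\]
defines a continuous map $g\colon S^{n-1}\to S^{n-2}\subseteq\mathbb{R}^{n-1}$ with $g(-x)=-g(x)$. Thus it suffices to show that there is no continuous $\mathbb{Z}_2$-equivariant (antipodal) map $S^{n-1}\to S^{n-2}$.

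To rule this out I would pass to the free antipodal quotients. Such a $g$ descends to $\bar g\colon\mathbb{R}P^{n-1}\to\mathbb{R}P^{n-2}$, and because $g$ is an equivariant lift of $\bar g$, the pullback along $\bar g$ of the tautological double cover of $\mathbb{R}P^{n-2}$ is the tautological double cover of $\mathbb{R}P^{n-1}$; equivalently, $\iota_{n-2}\circ\bar g$ is homotopic to $\iota_{n-1}$, where $\iota_{k}\colon\mathbb{R}P^{k}\hookrightarrow\mathbb{R}P^{\infty}$ is the inclusion. Writing $w\in H^{1}(\mathbb{R}P^{\infty};\mathbb{Z}_2)$ for the generator and $w_{k}=\iota_{k}^{*}w$, naturality gives $\bar g^{*}(w_{n-2})=w_{n-1}$. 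Using the ring structure $H^{*}(\mathbb{R}P^{k};\mathbb{Z}_2)\cong\mathbb{Z}_2[w_{k}]/(w_{k}^{k+1})$ and multiplicativity of $\bar g^{*}$,
\[
w_{n-1}^{\,n-1}=\bigl(\bar g^{*}w_{n-2}\bigr)^{n-1}=\bar g^{*}\bigl(w_{n-2}^{\,n-1}\bigr)=0,
\]
the last equality because $w_{n-2}^{\,n-1}\in H^{n-1}(\mathbb{R}P^{n-2};\mathbb{Z}_2)=0$ (as $\dim\mathbb{R}P^{n-2}=n-2<n-1$; this also covers $n=2$, where $H^{1}(\mathbb{R}P^{0};\mathbb{Z}_2)=0$). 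This contradicts $w_{n-1}^{\,n-1}\neq 0$ in $H^{n-1}(\mathbb{R}P^{n-1};\mathbb{Z}_2)\cong\mathbb{Z}_2$, and the theorem follows.

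The only non-formal ingredient is the mod-$2$ cohomology ring of real projective space, together with the observation that an antipodal map induces an isomorphism on $H^{1}(-;\mathbb{Z}_2)$ of the quotients; both are standard, so I expect no genuine obstacle here, only the choice of which argument to present. An alternative closer to degree theory is to invoke Borsuk's theorem that an odd self-map of $S^{n-1}$ has odd degree: the composite $S^{n-1}\to S^{n-2}\hookrightarrow S^{n-1}$ would then be odd yet factor through a proper subsphere, hence have degree $0$, a contradiction. That route, however, merely relocates the real work into the proof of the odd-degree statement (typically an induction on $n$ or a transfer argument), so given the cohomological character of the rest of the paper I would present the projective-space version above.
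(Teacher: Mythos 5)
Your proof is correct. Note, however, that the paper does not prove this statement at all: Theorem 4.2 is the classical Borsuk--Ulam theorem, stated and used as a known black box (the paper's original content lies in how it is applied, via Lemma 4.1 and the orthonormal frame field, not in the theorem itself). So there is nothing in the paper to compare your argument against; what you have supplied is the standard cohomological proof, and it is sound. The reduction to the nonexistence of an antipodal map $S^{n-1}\to S^{n-2}$ is handled correctly, the identification $\bar g^{*}(w_{n-2})=w_{n-1}$ is justified properly by the pullback of double covers (and your remark that the $n=2$ case is absorbed by $H^{1}(\mathbb{R}P^{0};\mathbb{Z}_2)=0$ is right, since $\bar g^{*}(w_0)=w_1$ with $w_0=0$ already gives the contradiction there), and the cup-product computation finishes it. In a paper of this kind one would simply cite [13] or [17] rather than reprove the result, but as a self-contained argument yours is complete; the only genuinely nontrivial input is the ring $H^{*}(\mathbb{R}P^{k};\mathbb{Z}_2)\cong\mathbb{Z}_2[w_k]/(w_k^{k+1})$, which you correctly flag as the one external fact.
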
 
	
\section{Regular $q$-Fans, $q\geq 3$}

\subsection{Proof of Theorem 2.2}

\begin{proof} Let $\mu_1,\ldots, \mu_m$ be mass distributions on $\mathbb{R}^{2t}$, $t=\frac{(q-1)n}{2}$. For $k=\min\{\rho(q;\mathbb{C},t),n/m\}$ (respectively, $k=\min\{\rho(q;\mathbb{R}, 2t),n/m\}$), let $s(x)=(s_1(x)=x,s_2(x) \ldots, s_k(x))$ be a  section of $V_k(\mathbb{C}^t)$ (respectively, $V_k(\mathbb{R}^{2t})$), equivariant with respect to a given $\mathbb{Z}_q$-action on $V_k(\mathbb{C}^{t})$ (respectively, $V_k(\mathbb{R}^{2t})$), and extend each $s_i$ to a $\mathbb{Z}_q$-equivariant map $\tilde{s}_i:\mathbb{R}^{2t}\rightarrow \mathbb{R}^{2t}$ by setting $\tilde{s}_i(ax)=as_i(x)$ for each $a\geq 0$ and each $x\in S^{2t-1}$.

	Fix $1\leq i \leq k$. For each $\mathbf{u}=(u_0,\ldots, u_t)\in S^{2t +1}\subseteq\mathbb{C}^{t+1}$, we define $S_j^i(\mathbf{u}) :=  \{\mathbf{z}\in \mathbb{C}^t\mid \langle \mathbf{z}, \tilde{s}_i(u_1,\ldots, u_t) \rangle_{\mathbb{C}} = - (\bar{u}_0)^r +v; \arg(v)\in [\frac{2\pi j}{q}, \frac{2\pi (j+1)}{q}]\}$ for each $0\leq j<q$. For the given $1\leq r<q$, $\tilde{s}_i(\zeta_q u)=\zeta_q^r\tilde{s}_i(u)$ for each $u\in\mathbb{R}^{2t}$, and so $S^i_0(\zeta_q^j \mathbf{u})=S^i_{jr}(\mathbf{u})$ (mod $q$) for each $0 \leq j <q$.
	
	To ensure continuity of our construction, we exclude from $S^{2t +1}$ the set $X_q:=\{\zeta_q^j\}_{j=0}^{q-1}\times 0$. When $\mathbf{u}\notin S^1\times 0$, the $S_j^i(\mathbf{u})$ are the regular $q$-sectors of the complex regular $q$-fan $F^{\mathbb{C},i}_q(\mathbf{u}):=F^{\mathbb{C}}_q(s_i(\frac{(u_1,\ldots, u_t)}{||(u_1,\ldots, u_t)||}), \frac{-u_0^r}{||(u_1,\ldots, u_t)||}))$. On the other hand, if $\mathbf{u}=(u_0,0)\in S^1 \times 0$, that $u_0\neq \zeta_q^j$ for any $0\leq j<q$ implies that for each $1\leq i\leq k$ there exists some unique $0\leq j_i<q$ such that $S^i_{j_i}(\mathbf{u}) = \mathbb{R}^{2t}$  and $S^i_j(\mathbf{u})=\emptyset$ for $j \neq j_i$.
	
	Now let  $\mu_1,\ldots, \mu_m$ be mass distributions on $\mathbb{R}^{2t}$. For each $1\leq \ell \leq m$, let $f^\ell=(f^\ell_1,\ldots, f^\ell_k): S^{2t+1} - X_q \rightarrow \mathbb{R}^k$ be given by  $f^\ell_i(\mathbf{u}) = \mu_\ell(S_0^i(\mathbf{u}))$ for each $1\leq i \leq k$, and let $f=(f^1,\ldots, f^m): S^{2t+1}- X_q\rightarrow \mathbb{R}^{km}\subseteq\mathbb{R}^n$. It follows as in [23] that $f$  is continuous, so by Theorem 5.2 below there exists some $\mathbb{Z}_q$-orbit $\{\zeta_q^j\mathbf{u}\}_{j=0}^{q-1} \subseteq S^{2t +1} - X_q$ for which $\mu_\ell(S^i_{jr}(\mathbf{u})) = \mu_\ell(S^i_0(\zeta_q^j\mathbf{u}))=\mu_\ell(S^i_0(\mathbf{u}))$ for each $1\leq i \leq k$, $0\leq j < q$, and $1\leq \ell \leq k$. By the discussion above, this $\mathbf{u}$ cannot be in $S^1\times 0$, lest $\mu_\ell(\mathbb{R}^{2t})=\mu(\emptyset)=0$. Thus each $F^{\mathbb{C},i}_q(\mathbf{u})$ is a complex regular $q$-fan, and as $\sum_{j=0}^{q-1} \mu_\ell(S^i_{jr}(\mathbf{u}))=\mu_\ell(\mathbb{R}^{2t})$, each $F^{\mathbb{C},i}_q(\mathbf{u})$ equipartitions each $\mu_\ell$. 
	
	Supposing $s$ is a section of $V_k(\mathbb{C}^t)$, the $s_i(\frac{(u_1,\ldots, u_t)}{||(u_1,\ldots, u_t)||})$ are complex orthonormal, and hence the centering complex hyperplanes  of the $F^{\mathbb{C},i}_q(\mathbf{u})$ are pairwise complex orthogonal. Thus these regular $q$-fans are pairwise orthogonal. In the case $s$ is a section of $V_k(\mathbb{R}^{2t})$, Proposition 5.1 applied to the orthonormal vectors $s_1(\frac{(u_1,\ldots, u_t)}{||(u_1,\ldots, u_t)||}),\ldots, s_k(\frac{(u_1,\ldots, u_t)}{||(u_1,\ldots, u_t)})$ shows that  at least $\lceil \frac{k}{2}\rceil$ of these vectors are complex linearly independent, and hence that at least $\lceil \frac{k}{2}\rceil$ of the regular $q$-fans are linearly independent.\end{proof}	

\begin{proposition} If $u_1,\ldots, u_k$ are linearly independent vectors in $\mathbb{R}^{2t}$, then at least $r=\lceil \frac{k}{2}\rceil$ of the $u_j$ are complex linearly independent in $\mathbb{C}^t$.\end{proposition}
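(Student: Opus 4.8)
The plan is to pass to the complexification and count dimensions. Identify $\mathbb{R}^{2t}$ with $\mathbb{C}^t$ via a fixed real-linear isomorphism, so that multiplication by $i$ on $\mathbb{C}^t$ corresponds to a fixed real-linear operator $J$ on $\mathbb{R}^{2t}$ with $J^2 = -I$. A set of vectors $v_1,\ldots,v_s$ is complex linearly independent in $\mathbb{C}^t$ precisely when $v_1, Jv_1, v_2, Jv_2, \ldots, v_s, Jv_s$ are linearly independent over $\mathbb{R}$; more generally, the complex span of a set of vectors is its real span together with the image of that real span under $J$.

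The key step is the following greedy selection. Starting from the linearly independent list $u_1,\ldots,u_k$, build a sublist by going through the $u_j$ in order and keeping $u_j$ exactly when $u_j \notin \mathrm{span}_{\mathbb{C}}(\{u_i : i<j \text{ and } u_i \text{ was kept}\})$; equivalently, keep $u_j$ unless it lies in the complex span of the previously kept vectors. Let $u_{j_1},\ldots,u_{j_r}$ be the kept vectors. By construction these are complex linearly independent. The point is to bound $r$ from below. Each discarded $u_j$ lies in the complex span $W_j$ of the vectors kept before it, and $\dim_{\mathbb{R}} W_j \le 2(\text{number kept before }j)$. The crux is that a discarded vector, together with its $J$-image, can contribute at most $2$ to accounting for the original $k$ linearly independent $u$'s: more precisely, I will argue that the total real dimension $k = \dim_{\mathbb{R}}\mathrm{span}_{\mathbb{R}}(u_1,\ldots,u_k)$ is at most $\dim_{\mathbb{R}}\mathrm{span}_{\mathbb{C}}(u_{j_1},\ldots,u_{j_r}) \le 2r$, since every $u_j$ — kept or discarded — lies in the complex span of the kept ones. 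Hence $k \le 2r$, i.e. $r \ge k/2$, and since $r$ is an integer, $r \ge \lceil k/2 \rceil$.

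The main obstacle is making the inequality $\dim_{\mathbb{R}}\mathrm{span}_{\mathbb{R}}(u_1,\ldots,u_k) \le 2r$ airtight: one must check that every $u_j$, whether kept or discarded, indeed lies in $\mathrm{span}_{\mathbb{C}}(u_{j_1},\ldots,u_{j_r})$. For a kept vector this is immediate; for a discarded $u_j$ this holds at the moment of discarding (it lies in the complex span of the then-kept vectors, which is a subspace of the final complex span). Then $\mathrm{span}_{\mathbb{R}}(u_1,\ldots,u_k) \subseteq \mathrm{span}_{\mathbb{C}}(u_{j_1},\ldots,u_{j_r})$, and the latter has real dimension exactly $2r$ because the $u_{j_\ell}$ are complex independent. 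This gives $k \le 2r$ and completes the argument; the bound is sharp, as one sees by taking $u_1, Ju_1, u_2, Ju_2, \ldots$, where exactly half the vectors survive the selection.
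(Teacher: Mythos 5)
Your proof is correct and rests on the same idea as the paper's: a maximal (greedily chosen) complex-linearly-independent subset of size $r$ has complex span of real dimension $2r$, which must contain the real span of all $k$ vectors, forcing $k \le 2r$. The paper packages this as an induction on $k$ rather than a one-shot greedy selection, but the dimension count at the heart of the argument is identical.
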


\begin{proof} We argue by induction of $k$. For $k=1$ there is nothing to show, and for the induction step it is enough to assume that $k=2r$. Suppose that $u_1,\ldots, u_{k+1}$ are linearly independent in $\mathbb{R}^{2t}$. By induction, we may assume that the vectors $u_1,\ldots, u_r$ are complex linearly independent in $\mathbb{C}^t$, i.e., that $u_1,iu_1,\ldots, u_r, iu_r$ are linearly independent in $\mathbb{R}^{2t}$. If $u_1,\ldots, u_r, u_j$ were not complex linearly independent in $\mathbb{C}^t$ for any $r<j\leq k+1$, then the linear span of $\{u_1,iu_1,\ldots, u_r,iu_r\}$ would equal the linear span of $\{u_1,u_2,\ldots, u_{k+1}\}$, contradicting the linear independence of these vectors. \end{proof}
	
\begin{theorem} Let $q$ be an odd prime. Given any continuous map $f: S^{(q-1)n+1}-X_q\rightarrow \mathbb{R}^n$, there exists some $\mathbb{Z}_q$-orbit $\{\zeta_q^kx\}_{k=0}^{q-1}\subseteq S^{(q-1)n+1}-X_q$ such that $f(x)=f(\zeta_qx)=\ldots=f(\zeta_q^{q-1}x)$. \end{theorem}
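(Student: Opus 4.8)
The plan is to prove the contrapositive by manufacturing, from a hypothetical ``bad'' $f$, a $\mathbb{Z}_q$-equivariant map into a sphere carrying a free linear $\mathbb{Z}_q$-action, and then to contradict a Dold-type non-existence theorem. Write $M := S^{(q-1)n+1}-X_q$ with its restricted $\mathbb{Z}_q$-action, and suppose for contradiction that for every $x\in M$ the points $f(x),f(\zeta_q x),\dots,f(\zeta_q^{q-1}x)$ are not all equal. Let $\mathbb{Z}_q$ act on $(\mathbb{R}^n)^q$ by cyclically permuting the $q$ blocks, with fixed subspace the diagonal $\Delta\cong\mathbb{R}^n$ and $\Delta^{\perp}=W=\{(y_0,\dots,y_{q-1}):\sum_i y_i=0\}$, an invariant subspace of real dimension $(q-1)n$. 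Since $q$ is prime, a vector of $W$ fixed by any non-identity element of $\mathbb{Z}_q$ is fixed by all of $\mathbb{Z}_q$, hence lies in $\Delta\cap W=\{0\}$; so $\mathbb{Z}_q$ acts freely on $W\setminus\{0\}$. The assignment $g'(x):=(f(x),f(\zeta_q x),\dots,f(\zeta_q^{q-1}x))$ is a continuous $\mathbb{Z}_q$-equivariant map $M\to(\mathbb{R}^n)^q$, and the contradiction hypothesis says exactly that $g'(x)\notin\Delta$ for all $x$, i.e. $p(g'(x))\neq 0$ where $p\colon(\mathbb{R}^n)^q\to W$ is the orthogonal projection. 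Hence
\[ g:=\frac{p\circ g'}{\|p\circ g'\|}\colon M\longrightarrow S(W)\cong S^{(q-1)n-1} \]
is a well-defined continuous $\mathbb{Z}_q$-equivariant map into a sphere with free $\mathbb{Z}_q$-action.

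Next I would pin down the homotopy type of the source. Deleting the $q$ distinct points of $X_q$ from $S^{(q-1)n+1}$ yields a space homotopy equivalent to a wedge of $q-1$ copies of $S^{(q-1)n}$ (by repeated Mayer--Vietoris/van Kampen, removing one point at a time). Since $q\geq 3$ we have $(q-1)n\geq 2$ once $n\geq 1$ (the case $n=0$ being trivial), so $M$ is simply connected with $\pi_i(M)=0$ for $i\leq (q-1)n-1$, i.e. $M$ is $((q-1)n-1)$-connected; it is also paracompact of CW homotopy type, being an open submanifold of a sphere. The contradiction now comes from Dold's theorem: for $G$ a nontrivial finite group there is no $G$-equivariant map from an $r$-connected paracompact $G$-space to a free $G$-CW complex of covering dimension $\leq r$. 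Applying this with $G=\mathbb{Z}_q$ and $r=(q-1)n-1$, the source $M$ is $r$-connected while the target $S(W)$ is a free $\mathbb{Z}_q$-CW complex of dimension exactly $(q-1)n-1=r$, so $g$ cannot exist. This contradiction shows that some orbit $\{\zeta_q^k x\}_{k=0}^{q-1}\subseteq M$ satisfies $f(x)=f(\zeta_q x)=\dots=f(\zeta_q^{q-1}x)$.

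The one genuine subtlety is that excising the single free orbit $X_q$ costs exactly one unit of connectivity, dropping it from $(q-1)n$ to $(q-1)n-1$, which still matches $\dim S(W)$ exactly; so Dold's bound is attained with no slack, and I would take care to confirm the homotopy equivalence $M\simeq\bigvee_{q-1}S^{(q-1)n}$ and hence the precise connectivity bound. If one prefers to avoid quoting Dold, the same conclusion follows from the ideal-valued (or mod-$q$ cohomology) index: in the Serre spectral sequence of $M\hookrightarrow M/\mathbb{Z}_q\to B\mathbb{Z}_q$ the fibre is $((q-1)n-1)$-connected, so the edge homomorphism $H^{(q-1)n}(B\mathbb{Z}_q;\mathbb{Z}_q)\to H^{(q-1)n}(M/\mathbb{Z}_q;\mathbb{Z}_q)$ is injective, giving $\operatorname{ind}_{\mathbb{Z}_q}M\geq (q-1)n>(q-1)n-1=\operatorname{ind}_{\mathbb{Z}_q}S(W)$; monotonicity of the index under $g$ then yields the contradiction. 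Either route completes the proof, and in both the substantive work is the connectivity bookkeeping together with the observation that primality of $q$ forces the target action to be free.
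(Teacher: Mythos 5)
Your proof is correct, but it takes a genuinely different route from the paper. The paper does not argue from scratch: it quotes the ``$\mathbb{Z}_q$-Borsuk--Ulam theorem'' of [23] (which produces a point where all the twisted averages $\sum_{k=0}^{q-1}\zeta_q^{-kr}f_i(\zeta_q^kx)$ vanish for $r=1,\dots,(q-1)/2$, using $n(q-1)/2$ complex coordinates) and then inverts the discrete Fourier transform on $\mathbb{Z}_q$ via the identity $\sum_{r=1}^{(q-1)/2}\cos(2\pi rk/q)=-\tfrac12$ to conclude that $f_i$ is constant on the orbit. You instead assemble the test map $x\mapsto (f(\zeta_q^kx))_k$, project off the diagonal onto the $(q-1)n$-dimensional complement $W$ (where primality of $q$ makes the cyclic action free on $S(W)$), and contradict Dold's theorem using the fact that $S^{(q-1)n+1}-X_q\simeq \bigvee_{q-1}S^{(q-1)n}$ is $((q-1)n-1)$-connected while $\dim S(W)=(q-1)n-1$; all of these steps check out, including the equivariance of the shift map, the freeness of the target action, and the borderline dimension count. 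The two arguments are really dual descriptions of the same obstruction --- $W$ decomposes as $\bigoplus_{r=1}^{(q-1)/2}V_r^{\oplus n}$ over the nontrivial rotation representations $V_r$ of $\mathbb{Z}_q$, and the projections onto these isotypic pieces are exactly the twisted averages of [23] --- but yours is self-contained modulo the standard Dold theorem, whereas the paper's is a short derivation from the author's earlier, more specialized result (which in exchange records the finer information that every nontrivial Fourier coefficient of the orbit values vanishes).
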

	
\begin{proof}  This is a corollary of the ``$\mathbb{Z}_q$-Borsuk Ulam Theorem" considered in [23], which states that given any continuous map $g=(g_1,\ldots, g_n):S^{2n+1}-X_q\rightarrow \mathbb{C}^n$ and any $n$ integers $1\leq r_1,\ldots, r_n <q$ relatively prime to $q$ , there exists some $x\in S^{2n+1}-X_q$ such that $\sum_{k=0}^{q-1}\zeta_q^{-kr_i}g_i(\zeta_q^kx)=0$ for each $1\leq i \leq n$. 

	For $q$ an odd prime and $f=(f_1,\ldots, f_n):S^{(q-1)n+1}-X_q\rightarrow \mathbb{R}^n$ a continuous map, we apply this theorem to the map $g=(g_{1,1},\ldots, g_{n,(q-1)/2}):S^{(q-1)n+1}-X_q\rightarrow \mathbb{R}^{n(q-1)/2}\subseteq\mathbb{C}^{n(q-1)/2}$ and numbers $r_{1,1},\ldots, r_{n,(q-1)/2}$ by letting $g_{i,k}=f_i$ and $r_{i,k}=k$ for each $1\leq i \leq n$ and $1\leq k \leq (q-1)/2$.

Fixing $1\leq i \leq n$, we have \begin{equation} \sum_{k=0}^{q-1}\zeta_q^{-kr}f_i(\zeta_q^kx)=0\end{equation} for each $1\leq r\leq (q-1)/2$. As $\sum_{r=1}^{\frac{q-1}{2}}\cos(\frac{2\pi rk}{q})=-\frac{1}{2}$ for each $1\leq k<q$, summing (5.1) over $r$ and evaluating the real part of the ensuing equation gives $\frac{q-1}{2}f_i(x)-\frac{1}{2}(B_i(x)-f_i(x))=0$, and hence $f_i(x)=\frac{1}{q}B_i(x)$, where $B_i(x)=\sum_{k=0}^{q-1}f_i(\zeta_q^kx)$. If we multiply (5.1) by $\zeta_q^{jr}$ for each $r$, similar reasoning yields $f_i(\zeta_q^jx)=\frac{1}{q}B_i(x)$ for each $j>0$ as well. \end{proof}

\subsection{Proof of Theorem 2.3}

As in the proof of Theorem 2.1, one has the following observation concerning $\mathbb{Z}_4$-equivariant maps and regular 4-fans:

\begin{lemma} Let $\mu$ be a mass distribution on $\mathbb{R}^{2n}$. A $\mathbb{Z}_{4}$-equivariant map $s: S^{2n-1}\rightarrow S^{2n-1}$ induces a continuous family $\{F_4(x)=F_4(ix)=F_4(-x)=F_4(-ix)\}_{x\in S^{2n-1}}$ of complex regular $4$-fans parametrized by $L^{2n-1}(4)$, each of whose opposite regular $4$-sectors $S_k(x)$ and $S_{k+2}(x)=S_k(-x)$  have equal measure. \end{lemma}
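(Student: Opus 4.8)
The plan is to mimic the structure of the proof of Lemma 4.1, replacing half-spaces and the real parameter $t\in\mathbb{R}$ by regular $4$-sectors and a complex parameter $v\in\mathbb{C}$, and to use the $\mathbb{Z}_4$-equivariance of $s$ to control how the combinatorial data transforms under the four-fold symmetry. Concretely, for $x\in S^{2n-1}$ and $v\in\mathbb{C}$ I would set $F_4(x,v)=F^{\mathbb{C}}_4(s(x),\bar v)=\{\mathbf{z}\in\mathbb{C}^n\mid\langle\mathbf{z},s(x)\rangle_{\mathbb{C}}=\bar v+w,\ \arg(w)=2\pi j/4,\ 0\le j<4\}$, using the identification of complex regular $4$-fans with elements of $E(\gamma_4)$ recalled in Section 3, and write $S_k(x,v)$, $0\le k<4$, for the corresponding closed regular $4$-sectors $\{\arg(\langle\mathbf{z},s(x)\rangle_{\mathbb{C}}-\bar v)\in[\tfrac{2\pi k}{4},\tfrac{2\pi(k+1)}{4}]\}$. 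Since $s(ix)=is(x)$, replacing $x$ by $ix$ multiplies $\langle\mathbf{z},s(x)\rangle_{\mathbb{C}}$ by a scalar of modulus one, i.e.\ rotates the fan; the point is to track this rotation precisely so as to identify which sector goes to which.

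The key computation is the bookkeeping of the $\mathbb{Z}_4$-action on sectors. With the convention above one checks that $S_k(ix,v)=S_{k-r}(x,v)$ (indices mod $4$) for the relevant primitive $r$ — exactly the kind of index shift appearing in the proof of Theorem 2.2 via $S^i_0(\zeta_q^j\mathbf{u})=S^i_{jr}(\mathbf{u})$ — and in particular the antipodal substitution $x\mapsto -x=i^2x$ shifts indices by $2$, so $S_{k+2}(x,v)=S_k(-x,v)$. This gives the claimed relation $S_k(x)=S_{k+2}(-x)$ once the value of $v$ is chosen equivariantly. It is convenient to prove the index-shift identity once and for all and then simply invoke it.

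Next I would fix the parameter $v$ to equipartition $\mu$ in the only way the symmetry allows. For fixed $x$, consider the map $v\mapsto(\mu(S_0(x,v)),\mu(S_1(x,v)),\mu(S_2(x,v)))\in\mathbb{R}^3$; as $v$ ranges over $\mathbb{C}\cong\mathbb{R}^2$ this is continuous, and as in Lemma 4.1 the monotonicity properties of $\mu$ (now monotonicity in each sector as $v$ moves) force the existence of a value — or rather, since $\mathbb{C}$ is two-dimensional and we have only a one-complex-parameter family but need a codimension-two condition, a value $v(x)$ at which $\mu(S_0)=\mu(S_1)=\mu(S_2)=\mu(S_3)=\tfrac14\mu(\mathbb{R}^{2n})$: moving the centering complex hyperplane along the two real directions of $v$ lets one balance the two independent differences $\mu(S_0)-\mu(S_2)$ and $\mu(S_1)-\mu(S_3)$ by an intermediate-value / Borsuk-type argument, and one checks this balancing value can be taken to depend continuously on $x$ (taking, say, a canonically chosen center of the solution set). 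By the index-shift identity, the equipartitioning value transforms as $v(ix)=v(x)$ up to the appropriate rotation, so $F_4(x):=F_4(x,v(x))$ is well defined on $L^{2n-1}(4)=S^{2n-1}/\mathbb{Z}_4$, and its opposite sectors satisfy $S_k(x)=S_{k+2}(-x)$ with equal $\mu$-measure $\tfrac14\mu(\mathbb{R}^{2n})$, as required.

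The main obstacle is the middle step: unlike the hyperplane case, where a single monotone real function pinned down $t(x)$ by the intermediate value theorem, here I must solve a two-dimensional balancing problem (two independent constraints $\mu(S_0)=\mu(S_2)$ and $\mu(S_1)=\mu(S_3)$) in the two-real-dimensional parameter $v$, produce a solution, and — the genuinely delicate part — choose it continuously in $x\in S^{2n-1}$ while respecting the $\mathbb{Z}_4$-equivariance so that the family descends to $L^{2n-1}(4)$. I expect to handle existence by a planar fixed-point or Poincar\'e--Miranda argument applied to the pair of differences (both of which are odd under suitable reflections of $v$ coming from the sector structure), and continuity/equivariance by checking the solution set is a nonempty compact convex-enough set depending continuously on $x$ and taking its barycenter, exactly as the ``center of the interval'' device was used in Lemma 4.1.
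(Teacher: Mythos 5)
Your argument hinges on producing the centering value $v(x)\in\mathbb{C}$ and choosing it continuously and $\mathbb{Z}_4$-equivariantly in $x$, and that is precisely the step you leave unresolved; the machinery you propose for it (a planar Poincar\'e--Miranda argument followed by taking the barycenter of the solution set) is both unnecessary and problematic, since the solution set of a genuinely coupled two-dimensional balancing problem need not be convex or even connected, so ``take its barycenter'' does not obviously give a continuous selection. The observation that makes the lemma easy, and which the paper uses, is that a complex regular $4$-fan with normal direction $s(x)$ is the union of the two \emph{orthogonal real hyperplanes} $H_0=\{u\mid\langle u,s(x)\rangle=t_0\}$ and $H_1=\{u\mid\langle u,is(x)\rangle=t_1\}$, the four sectors being the four quadrants $H_k^{-}\cap H_{k+1}^{+}$; the two real coordinates of your parameter $v$ translate $H_0$ and $H_1$ \emph{independently}, and the pair of conditions $\mu(S_0)=\mu(S_2)$, $\mu(S_1)=\mu(S_3)$ is equivalent to the pair ``$H_0$ bisects $\mu$'' and ``$H_1$ bisects $\mu$''. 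Thus the two-dimensional problem decouples into two independent one-dimensional ones, each handled by the monotone-function/midpoint-of-interval device of Lemma 4.1 applied to $s(x)$ and to $is(x)$ separately, and continuity together with the equivariance relations $t_k(x)=t_0(i^kx)$ and $t_{k+2}(x)=-t_k(x)$ then comes for free.

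Separately, you overclaim the conclusion: making $\mu(S_0)-\mu(S_2)$ and $\mu(S_1)-\mu(S_3)$ vanish yields only that \emph{opposite} sectors have equal measure (hence $\mu(S_0)+\mu(S_1)=\tfrac{1}{2}\mu(\mathbb{R}^{2n})$), not that all four sectors equal $\tfrac{1}{4}\mu(\mathbb{R}^{2n})$. The lemma asserts only the former; the full equipartition $\mu(S_0)=\mu(S_1)$ cannot in general be arranged for every $x$ by choosing $v$, and is obtained only for \emph{some} $x$ in the proof of Theorem 2.3 via the degree-theoretic Theorem 5.4. If your stronger pointwise claim were attainable, that entire subsequent argument would be superfluous.
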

	
\begin{proof} The proof is analogous to that of Lemma 4.1. For $0\leq k<4$, each pair $(x,t)\in S^{2n-1}\times\mathbb{R}$ defines a family of hyperplanes  
 $H_k(x,t)=\{u\in\mathbb{R}^{2n}\mid \langle u, i^ks(x) \rangle = t\}$ in $\mathbb{R}^{2n}$. For simplicity, assume that $s(ix)=is(x)$ for each $x\in S^{2n-1}$ (the proof when $s(ix)=-is(x)$ is similar), so that $H_k^\pm(x,t)=H_0^\pm(i^kx,t)$ and  $H_{k+2}^\pm(x,t)=H_0^\pm(-i^k x,t)=H_0^\mp(i^k x,-t)=H_k^\mp(x,-t)$.

	As before, for each $x\in S^{2n-1}$ we can choose $t_k(x)\in\mathbb{R}$ for which the hyperplane $H_k(x):=H_k(x,t_k(x))$ bisects $\mu$ and so that the association $x\mapsto t_k(x)$ is continuous. As $t_k(x)=t_0(i^kx)$ and $t_{k+2}(x)=t_k(-x)=-t_k(x)$, we have $H_k^\pm(x)=H_0^\pm(i^kx)$ and $H_{k+2}^\pm(x)=H_k^\pm(-x)=H_k^\mp(x)$. Since $s(x)$ and $is(x)$ are orthonormal, the hyperplanes $H_0(x)=H_0(-x)=H_2(x)$ and $H_1(x)=H_1(-x)=H_3(x)$ are orthogonal, and hence $F_4(x):=H_0(x)\cup H_1(x)$ is a regular 4-fan, and indeed a complex regular 4-fan because $H_0(x)\cap H_1(x)=\{u\in\mathbb{C}^n\mid \langle u, x\rangle_{\mathbb{C}}=t_0(x) +it_1(x)\}$ is a complex hyperplane. The regular 4-sectors corresponding to $F_4(x)$ are given by $S_k(x):=H_k^-(x)\cap H_{k+1}^+(x)$, so $S_{k+2}(x)=S_k(-x)$. That both $H_k(x)$ and $H_{k+1}(x)$ bisect $\mu$ is equivalent to $\mu(S_k(x))=\mu(S_{k+2}(x))=\mu(S_k(-x))$. \end{proof}

\noindent We may now prove Theorem 2.3:

\begin{proof} Let $s(x) = (s_1(x)=x, s_2,\ldots, s_k)$ be a $\mathbb{Z}_4$-equivariant section of $V_k(\mathbb{C}^n)$ (respectively, $V_k(\mathbb{R}^{2n}$)), where $k=\min\{\rho(4;\mathbb{C},n), 2n-1\}$ (respectively, $k=\min\{\rho(4;\mathbb{R},2n), 2n-1\}$). For a mass distribution $\mu$ on $\mathbb{R}^{2n}$, let $\{F_4^j(x)\}_{x\in S^{2n-1}}$ and $\{S_\ell ^j(x)\mid 0\leq \ell<4\}_{x\in S^{2n-1}}$ denote the family of complex regular 4-fans and regular 4-sectors associated to the $s_j$ as given in the proof of Lemma 5.3.

As $S^j_0(\pm ix)=S^j_1(x)$ and $S^j_1(\pm ix) = S^j_0(x)$, defining $f=(f_1,\ldots, f_k):S^{2n-1}\rightarrow \mathbb{R}^k\subseteq\mathbb{R}^{2n-1}$ by $f_j(x)=\mu(S^j_0(x))-\mu(S^j_1(x))$ for each $1\leq j\leq k$ yields $f(ix)=-f(x)$, and hence Theorem 5.4 below guarantees some $x\in S^{2n-1}$ such that $\mu(S^j_0(x))=\mu(S^j_1(x))$ for each $1\leq j \leq k$. However, $\mu(S^j_2(x))=\mu(S^j_0(x))$ and $\mu(S^j_1(x))=\mu(S^j_3(x))$, so $\mu(S^j_k(x))=\frac{1}{4}\mu(\mathbb{R}^{2n})$ for each $0\leq k<4$.  

	In the case of complex Stiefel bundles, the complex orthonormality of  $s_1(x),$ $\ldots, s_k(x)$ shows that $F_4^1(x),\ldots, F_4^k(x)$ are orthogonal, while in the case of real Stiefel bundles Proposition 5.1 applied to these vectors implies that at least $\lceil \frac{k}{2}\rceil$ of the equipartitioning regular 4-fans $F_4^1(x),\ldots, F_4^k(x)$ are linearly independent. \end{proof}

\begin{theorem} If $f: S^{2n-1}\rightarrow\mathbb{R}^{2n-1}$ is a continuous map satisfying $f(ix)=-f(x)$ for each $x\in S^{2n-1}$, then $f(x)=0$ for some $x\in S^{2n-1}$. \end{theorem}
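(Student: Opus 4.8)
The plan is to prove Theorem 5.4 as a $\mathbb{Z}_4$-equivariant Borsuk--Ulam statement, the $q=4$ analogue of Theorem 5.2, with the geometric topology of $L^{2n-1}(4)$ replacing the $\mathbb{Z}_q$-Borsuk--Ulam theorem of [23] used there. Arguing by contradiction, suppose $f$ never vanishes; then $\phi:=f/\|f\|\colon S^{2n-1}\to S^{2n-2}$ is continuous with $\phi(ix)=-\phi(x)$, hence also $\phi(-x)=\phi(x)$. Letting $\mathbb{Z}_4=\langle i\rangle$ act freely on $S^{2n-1}$ by complex multiplication and on $\mathbb{R}^{2n-1}$ through the surjection $\mathbb{Z}_4\to\mathbb{Z}_2$ by $-\mathrm{id}$, the map $f$ is precisely a $\mathbb{Z}_4$-equivariant map, and so descends to a section of the Whitney sum $(2n-1)\lambda$ over $L^{2n-1}(4)=S^{2n-1}/\mathbb{Z}_4$, where $\lambda$ is the real line bundle classified by the surjection $\pi_1(L^{2n-1}(4))=\mathbb{Z}_4\to\mathbb{Z}_2$ (equivalently, the bundle associated to the double cover $\mathbb{R}P^{2n-1}\to L^{2n-1}(4)$), so that $w_1(\lambda)$ is the generator $u$ of $H^1(L^{2n-1}(4);\mathbb{Z}_2)\cong\mathbb{Z}_2$. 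Thus the theorem is equivalent to the assertion that $(2n-1)\lambda$ has no nowhere-zero section.

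To extract the contradiction I would run obstruction theory for a nowhere-zero section of this rank-$(2n-1)$ bundle over the closed $(2n-1)$-manifold $L^{2n-1}(4)$: there is a single obstruction, the twisted Euler class $e((2n-1)\lambda)\in H^{2n-1}(L^{2n-1}(4);\mathbb{Z}_u)$, with $\mathbb{Z}_u$ the integers twisted by $u$, and the goal is to show it is nonzero. By the Whitney sum formula this obstruction is $e(\lambda)^{2n-1}$, and $e(\lambda)^2$ equals the integral Euler class $c_1(\lambda\otimes_\mathbb{R}\mathbb{C})=\beta(u)$ (the integral Bockstein of $u$), which is the order-two element of $H^2(L^{2n-1}(4);\mathbb{Z})\cong\mathbb{Z}_4$ and in particular nonzero; the remaining work is to propagate nonvanishing up through the alternating untwisted and twisted groups $H^k(L^{2n-1}(4);\mathbb{Z})$, $H^k(L^{2n-1}(4);\mathbb{Z}_u)$. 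I would attempt this either by a direct computation of the Euler cocycle in the standard equivariant CW structure on $S^{2n-1}$ --- where the cellular cochain maps are alternately multiplication by $i-1$ and by $N=1+i+i^2+i^3$, and twisting by $u$ makes $N$ act as $0$ --- or by leveraging the fundamental relation (3.6), $TL^{2n-1}(4)\oplus\varepsilon^1\cong Re(E(\gamma_4))^{\oplus n}$, to compare $(2n-1)\lambda$ with bundles whose section behaviour is governed by $\rho(4;\mathbb{R},2n)$.

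I expect this last step to be the crux and the main obstacle. The naive ($\mathbb{Z}_2$-index) version of the argument is unavailable, since the mod-$2$ reduction of the obstruction is the top Stiefel--Whitney class $u^{2n-1}$, which vanishes for $n\ge2$ because $u^2=0$ in $H^*(L^{2n-1}(4);\mathbb{Z}_2)$ ($\mathrm{Sq}^1u=0$, as $u$ is the reduction of an integral $\mathbb{Z}_4$-class); one is therefore forced to work with the twisted-integral obstruction and to argue delicately there. For the instances actually used below the target dimension is some $k<2n-1$, and then the relevant obstruction lies in $H^k(L^{2n-1}(4);\mathbb{Z}_{ku})$; for the small $k$ that occur this is immediately seen to be nonzero --- e.g.\ for $k=2$ it is once more the order-two element of $H^2(L^{2n-1}(4);\mathbb{Z})\cong\mathbb{Z}_4$ --- so the equipartition consequences go through regardless.
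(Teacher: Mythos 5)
Your reduction of the statement to the nonvanishing of the twisted Euler class $e\bigl((2n-1)\lambda\bigr)\in H^{2n-1}(L^{2n-1}(4);\mathbb{Z}_u)$ is exactly right, as is your observation that the mod~$2$ reduction $u^{2n-1}$ dies because $u^{2}=\mathrm{Sq}^{1}u=0$. But the step you defer as ``the crux'' does not merely require delicacy---it fails, and one more line of your own computation shows this. You have $e(\lambda)^{2}=\beta(u)=2y$, the element of order two in $H^{2}(L^{2n-1}(4);\mathbb{Z})\cong\mathbb{Z}_4$, hence
\[
e(\lambda)^{2n-1}\;=\;e(\lambda)\smile (2y)^{\,n-1}\;=\;2^{\,n-1}\bigl(e(\lambda)\smile y^{\,n-1}\bigr),
\]
and every group $H^{\mathrm{odd}}(L^{2n-1}(4);\mathbb{Z}_u)$ is an $\mathbb{F}_2$-vector space (the twisted cochain complex is $\mathbb{Z}\xrightarrow{-2}\mathbb{Z}\xrightarrow{0}\mathbb{Z}\xrightarrow{-2}\cdots$), so this class vanishes already for $n=2$. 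Since the rank of $(2n-1)\lambda$ equals $\dim L^{2n-1}(4)$, the twisted Euler class is the \emph{complete} obstruction, so a nowhere-zero equivariant section exists and the theorem as stated is false for every $n\geq 2$. For $n=2$ there is an explicit counterexample: viewing $S^{3}\subseteq\mathbb{H}$, with $x\mapsto ix$ the left quaternion multiplication (which is the scalar $\mathbb{C}$-action under $(z_1,z_2)\mapsto z_1+z_2j$), the map $f(x)=\bar{x}jx$ takes values in the unit sphere of $\operatorname{Im}\mathbb{H}\cong\mathbb{R}^{3}$ and satisfies $f(ix)=-\bar{x}(iji)x=-f(x)$ because $iji=j$.

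For comparison, the paper proves the statement by a degree argument: it claims any $h:S^{2n-1}\to S^{2n-1}$ with $h(ix)=-h(x)$ has $\deg(h)\equiv 2\pmod 4$, contradicting $\deg(h)=0$ for the nullhomotopic $h=j\circ g$. That argument founders on precisely the fact you isolated: $\deg(h)=2\deg(\bar h)$ for the induced $\bar h:L^{2n-1}(4)\to\mathbb{R}P^{2n-1}$, the parity of $\deg(\bar h)$ is detected by $\bar h^{*}(w^{2n-1})=u^{2n-1}$, and this is $0$ for $n\geq 2$ since $u^{2}=0$; so one only gets $\deg(h)\equiv 0\pmod 4$ and no contradiction (indeed $(z_1,z_2)\mapsto(z_1^{2},z_2^{2})/\|(z_1^{2},z_2^{2})\|$ is such an $h$ of degree $4$). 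The part of your proposal worth keeping is the final remark: in the applications the target is $\mathbb{R}^{k}$ with $k=\min\{\rho(4;\mathbb{C},n),\ldots\}$, and for $k=2$ the obstruction $e(2\lambda)=2y\neq 0$ genuinely forces a zero (more simply, a nonvanishing equivariant $f$ would induce a surjection $\mathbb{Z}_4\to\mathbb{Z}\to\mathbb{Z}_2$ on fundamental groups, which is impossible). So the consequences drawn with $\rho(4;\mathbb{C},\cdot)=2$ survive, but neither your proposal nor the paper's argument establishes Theorem~5.4 in the stated generality, because it is not true in that generality.
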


\begin{proof} This follows from the intermediate value theorem when $n=1$ and is a standard if degree argument $n>1$: Supposing no such $x$ exists, the map $g(x)=f(x)/||f(x)||$ is continuous, as is the composition $h=j\circ g: S^{2n-1}\rightarrow S^{2n-1}$, where $j: S^{2n-2}\hookrightarrow S^{2n-1}$ is the inclusion $x\mapsto(x,0)$. As $h$ is nullhomotpic, $deg(h)=0$, but $h(ix)=-h(x)$ for each $x\in S^{2n-1}$ implies that $deg(h)\equiv 2$ \textit{mod} 4, as follows from examining the induced map $\bar{h}: L^{2n-1}(4)\rightarrow \mathbb{R}P^{2n-1}$ and the fundamental group and $\mathbb{Z}_2$-cohomology rings of the spaces involved (see, e.g., [13]). \end{proof}

\subsection{Some Lower bounds for Regular $q$-fans} 

	The values of $\rho(q;\mathbb{R},2n)$ $=Span(L^{2n-1}(q))+1$ given in [7] when $n\neq 4$ and deduced as in [14] when $n=4$ determine our lower bounds for $\Omega(q;k,2n)$. For instance, Span$(L^7(4))=5$ implies $3\leq \Omega(4;1,8)\leq 4$, while Span$(L^{16}(3))=9$ and Span$(L^{16}(4))=8$ yield the estimates $\Omega(3;1,16)\geq 4$ and $\Omega(4;16,4)\geq 4$, respectively. As with the case of hyperplanes, however, our estimates on $\Omega(q; (q-1)n)$ are relatively weak in comparison to $n$, owing to the relative lack of linearly independent vector fields on $L^{q(n-1)-1}$. For instance, one can compare Theorem 2.3 with the known value $\Omega^\perp(4;1,n)=\lfloor\frac{n-1}{2}\rfloor$ given in [16]. 

	On the other hand, the numbers $\rho(q;\mathbb{C},n)$ apparently have yet to be determined (see, e.g., [15, 20-21]), though the maximum value $k=\rho(\mathbb{C}, n)$ for which $V_k(\mathbb{C}^n)$ admits a (non-equivariant) section is known for all $n$. Unfortunately, $\rho(\mathbb{C}, n)$ (and hence $\rho(q; \mathbb{C},n)\leq \rho(\mathbb{C}, n))$ is quite small. Indeed,  $\rho(\mathbb{C},n)=1$ if $n$ is odd and $\rho(\mathbb{C}, 2k)=2$ for $1\leq k\leq 11$ (see [2, 4] for an explicit formula for arbitrary $n$). In fact, the {\it only} (see, e.g., [15, 20, 21]) explicitly known example of a section $s: S^{4n-1}\rightarrow V_2(\mathbb{C}^{2n})$ is $s(x) = (x,jx)$, where $j$ is the usual unit quaternion and $S^{4n-1}\subseteq\mathbb{H}^n$. This map is $S^1$-equivariant, with $s(\lambda x)=(\lambda x,\lambda^{-1}jx)$ for each $\lambda\in S^1$ and $x\in S^{4n-1}$, and hence is $\mathbb{Z}_q$-equivariant for each $q$. In particular, the triviality of the corresponding circle bundle $S^1\hookrightarrow V_2(\mathbb{C}^2)/\mathbb{Z}_q \rightarrow L^3(q)$ yields the optimal values $\Omega^\perp(3;1,4)=\Omega^\perp(4;1,4)=2$, while $\rho(q; \mathbb{C};4)=2$ yields the estimates $\Omega^\perp(5;1,8)\geq2$ and $\Omega^\perp(3;2,8)\geq 2$.  

\subsection{Regular $2q$-sectors} 

	Let $q$ be an odd prime. Given a mass distribution on $\mathbb{R}^{2n}$, the same reasoning as in Lemma 5.3 shows that the identity map on $S^{2n-1}$ induces a continuous family $\{H_0(x),H_1(x)=H_0(\pm \zeta_{2q}x), \ldots, H_{q-1}(x)=H_0(\pm \zeta_{2q}^{q-1}x)\}_{x\in S^{2n-1}}$ of $q$ hyperplanes in $\mathbb{R}^{2n}$ parametrized by $L^{2n-1}(2q)$, for which the angle $\langle H_k(x), H_{k+1}(x)\rangle$ between any two successive hyperplanes is always equal to $\pi/q$, and for which each pair $S_k(x)$ and $S_k(-x)=S_{k+q}(x)$ of opposite regular $2q$-sectors $S_k(x)=H_k^-(x)\cap H_{k+1}^+(x)$ have equal measure. Moreover, $S_k(x)=S_0(\zeta_{2q}^kx)$ for each $0\leq k<2q$.
		
	Theorem 5.2 still holds (with nearly identical proof) if the set $X_q$ is not removed, so in particular for any continuous map $f:S^q \rightarrow \mathbb{R}$ there exists some $x\in S^q$ for which $f(x)=f(\zeta_q^kx)$ for each $1\leq k<q$. Letting $f(x)=\mu(S_0(x))$ and noting that $\mu(S_j(x))=\mu(S_{j+q}(x))$ for all $0\leq j<q$ therefore yields the following near equipartition statement: 
	
\begin{theorem} Let $q$ be an odd prime and let $\mu$ be a mass distribution on $\mathbb{R}^{q+1}$. There exists a collection $\{H_i\}_{i=0}^{q-1}$ of $q$ hyperplanes with consecutive angles $\langle H_j, H_{j+1}\rangle =\frac{\pi}{q}$ for each $0\leq j<q$, so that $\mathbb{R}^{q+1}$ is covered by the regular $2q$-sectors $\{S_k=H_k^-\cap H_{k+1}^+\}_{k=0}^{2q-1}$ and $\mu(S_0)=\mu(S_1)=\ldots = \mu(S_{2q-1})$.\end{theorem}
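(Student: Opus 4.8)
The plan is to run the configuration space/test-map machinery exactly as in the proof of Theorem 2.3, but with the cyclic group $\mathbb Z_4$ replaced by $\mathbb Z_{2q}$ and the single hyperplane direction $s(x)=x$ playing the role of the (trivial) Stiefel section, so that one works on the Lens space $L^{2n-1}(2q)$ rather than $L^{2n-1}(4)$. First I would take $\mathbb R^{2n}=\mathbb C^n$ with $n=\frac{q+1}{2}$, so that $2n=q+1$, and for each $x\in S^{2n-1}$ and each $0\le k<q$ consider the hyperplane $H_k(x,t)=\{u\mid\langle u,\zeta_{2q}^k x\rangle=t\}$; as in Lemma 5.3 the monotonicity of $\mu$ lets me pick, continuously in $x$, a bisecting level $t_k(x)$ with $t_k(x)=t_0(\zeta_{2q}^k x)$ and $t_{k+q}(x)=t_k(-x)=-t_k(x)$. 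This gives the claimed family $\{H_0(x),\dots,H_{q-1}(x)\}$ of $q$ hyperplanes parametrized by $L^{2n-1}(2q)$, with successive angles $\pi/q$, regular $2q$-sectors $S_k(x)=H_k^-(x)\cap H_{k+1}^+(x)$, and the two symmetry relations $S_k(x)=S_0(\zeta_{2q}^k x)$ and $S_{k+q}(x)=S_k(-x)$; in particular $\mu(S_k(x))=\mu(S_{k+q}(x))$ for every $x$ and every $k$, so it suffices to equalize $\mu(S_0(x)),\dots,\mu(S_{q-1}(x))$.

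Next I would invoke the variant of Theorem 5.2 noted in the paragraph preceding the statement: Theorem 5.2 holds with nearly identical proof when the deleted set $X_q$ is not removed, so for a continuous $f:S^{q}\to\mathbb R$ — here $S^{q}=S^{2n-1}$ since $2n-1=q$ — there is some $x\in S^{q}$ with $f(x)=f(\zeta_q^k x)$ for all $1\le k<q$. Applying this to $f(x)=\mu(S_0(x))$ produces a point $x$ with $\mu(S_0(x))=\mu(S_0(\zeta_q^k x))$ for $1\le k<q$. Now I must translate the $\zeta_q$-orbit condition into a statement about all of $S_0,\dots,S_{2q-1}$: since $\zeta_q=\zeta_{2q}^2$, the relation $S_k(x)=S_0(\zeta_{2q}^k x)$ gives $\mu(S_{2j}(x))=\mu(S_0(\zeta_{2q}^{2j}x))=\mu(S_0(\zeta_q^{j}x))=\mu(S_0(x))$ for all $j$, i.e. all \emph{even}-indexed sectors have equal measure; combining this with $\mu(S_{k+q}(x))=\mu(S_k(x))$ and the fact that $q$ is odd (so adding $q$ toggles parity of the index) forces all $2q$ sectors $S_0(x),\dots,S_{2q-1}(x)$ to have the common measure $\frac1{2q}\mu(\mathbb R^{q+1})$. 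Fixing this $x$ and setting $H_i=H_i(x)$, $S_k=S_k(x)$ finishes the proof.

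The one genuine point needing care — the step I expect to be the real obstacle — is the construction of the continuous bisecting levels $t_k(x)$ and the verification that the induced map $x\mapsto\mu(S_0(x))$ is continuous on the \emph{whole} sphere, with no deleted set: unlike in the $q$-fan arguments of Theorem 2.2, here the centering affine subspace is an honest hyperplane through a genuine normal direction $\zeta_{2q}^k x$ that never degenerates, and the argument of Lemma 5.3 (intermediate value theorem plus monotonicity, taking the midpoint of the bisection interval to get continuity) goes through verbatim, so this is routine once one records it. The only other thing to double-check is the angle claim $\langle H_k(x),H_{k+1}(x)\rangle=\pi/q$, which is immediate from $\langle\zeta_{2q}^k x,\zeta_{2q}^{k+1}x\rangle=\cos(\pi/q)$ in the real inner product on $\mathbb R^{2n}=\mathbb C^n$, together with the observation that $H_k$ and $H_{k+q}$ are parallel (normals $\pm$ each other), so the $q$ distinct hyperplanes indeed cut $\mathbb R^{q+1}$ into $2q$ regular $2q$-sectors.
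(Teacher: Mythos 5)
Your proposal is correct and follows essentially the same route as the paper: build the $\mathbb{Z}_{2q}$-symmetric family of $q$ bisecting hyperplanes by the Lemma 5.3 argument with $s(x)=x$, apply the undeleted variant of Theorem 5.2 with $n=1$ to $f(x)=\mu(S_0(x))$ on $S^q$, and use $S_k(x)=S_0(\zeta_{2q}^kx)$ together with $\mu(S_k)=\mu(S_{k+q})$ and the oddness of $q$ to equalize all $2q$ sectors. The only point you leave slightly implicit is that $\mu(S_k)=\mu(S_{k+q})$ comes from both $H_k$ and $H_{k+1}$ bisecting $\mu$ (not merely from $S_{k+q}(x)=S_k(-x)$), but the paper is equally terse on this and the verification is immediate.
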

	
	Such a collection of $q$ hyperplanes as given by the theorem might not constitute a regular $2q$-fan, however, since the interiors of the $2q$-sectors may intersect non-trivially. Thus for any $\mu$ on $\mathbb{R}^4$ we can say there exists hyperplanes $H_0,H_1,H_2$ whose successive angles are always equal $\pi/3$ and whose corresponding regular 6-sectors cover $\mathbb{R}^4$ and all have measure equal to $\frac{1}{6}\mu(\mathbb{R}^4)+\frac{1}{3}\mu(I)$, where $I$ is the intersection of the regular sectors. This can be compared to the result that any mass distribution on $\mathbb{R}^4$ can be equipartitioned ``\textit{modulo} 2" by a regular 6-fan [23]. A similar decomposition gives $\mathbb{R}^6$ as the union of 10 regular 10-sectors of equal measure exists for any mass distribution on $\mathbb{R}^6$, and likewise for $\mathbb{R}^8$ and regular 14-sectors. For a single measure on $\mathbb{R}^{2q+2}$, the section $s(x)=(x,jx)$ of $V_2(\mathbb{C}^{2q+2})$ can be used as before to find a pair of collections of regular $2q$-sectors covering $\mathbb{R}^{2q+2}$, all of which have equal measure, and which are orthogonal in an appropriate sense.  
	
\section{Related Questions and Results}

	We conclude this paper by discussing some research related to Question 1.1. Of most importance is an equipartition problem originally posed by Gr\"unbaum [12]. Instead of asking for a collection of $k$ hyperplanes, each of which bisects a given collection of mass distributions, one demands instead that the hyperplanes {\it equipartition} each of the measures, i.e., that the each of the $2^k$ orthants determined by the $k$ hyperplanes contains $1/2^k$  the measure of each mass distribution. A triple $(k,m,n)$ is said to be {\it admissible} if any $m$ mass distributions on $\mathbb{R}^n$ can be equipartitioned by $k$ hyperplanes, and one searches for the minimum $n=\Delta(k,m)$ for which $(k,m,n)$ is admissible.  By considering the unit ball in $\mathbb{R}^n$, one can deduce as in [21] that the admissibility of $(k,m,n)$ implies that any $(m-1)$ distributions on $\mathbb{R}^n$ can be equipartitioned by $k$ orthogonal hyperplanes. In particular, one has $\Omega^\perp(2;m-1, \Delta(k,m))\geq k$ and $\Omega^\perp(4;m-1, \Delta(2k,m))\geq k$.  
For instance, the values $\Delta(2, 2^{r+1}-1) =  3\cdot 2^r-1$ of  [16] imply that $\Omega^\perp(4; 2^{r+1}-2, 3\cdot 2^r-1)\geq 1$.
	
	A major breakthrough on estimates of $\Delta(k,m)$ was given by Ramos [22], whose results were extended substantially in [17] and then in [9-10]. Better conditions for ensuring the orthogonality of equipartitioning hyperplanes were provided in [11].  Closely related to these results are those of Makeev [16], who showed ($i$) that for a single measure on $\mathbb{R}^n$, one always find $n-1$ pairwise orthogonal hyperplanes, any two of which equipartition the measure, (thereby establishing $\Omega^\perp(4;1,n) = \lfloor\frac{n}{2}\rfloor$), and ($ii$) that for any two such measures on $\mathbb{R}^n$ one can find $(n-1)$ pairwise orthogonal hyperplanes, any two of which equipartition both measures (thereby establishing the values $\Omega^\perp(4;2,n) = \lfloor \frac{n-1}{2}\rfloor$ and $\Omega^\perp(2;2,n)=n-1$).  Finally, we note that the values $\Omega(3;1,2)=\Omega(4;1,2)=1$ were also obtained by B\'ar\'any and Matou\v sek ([5, 6]), and that $\Omega(3;n-1,n)=1$ was established in [26]. 

\section{Acknowledgment}

The author thanks his thesis advisor, Sylvain E. Cappell, whose guidance and suggestions were of great help in the development of this paper, much of which is contained in the author's doctoral thesis. This research was supported in part by DARPA grant HR0011-08-1-0092.

\bibliographystyle{plain}

\end{document}